\numberwithin{equation}{section}
\theoremstyle{plain}
\newtheorem{theorem}{Theorem}[section]
\newtheorem{proposition}[theorem]{Proposition}
\newtheorem{lemma}[theorem]{Lemma}
\theoremstyle{remark}
\newtheorem{definition}[theorem]{Definition}
\newtheorem{remark}[theorem]{Remark}
\newtheorem*{example}{Example}
\begin{document}

\begin{frontmatter}
\title{Harmonic analysis meets stationarity: A general framework for series expansions of special Gaussian processes}
\runtitle{Series expansions of special Gaussian processes}

\begin{aug}
\author[A]{\fnms{Mohamed} \snm{Ndaoud}\ead[label=e1]{ndaoud@usc.edu}}
\address[A]{Department of Mathematics,
University of Southern California, Los Angeles, USA.
\printead{e1}}

\end{aug}


\maketitle

\begin{abstract}
In this paper, we present a new approach to derive series expansions for some Gaussian processes based on harmonic analysis of their covariance function. In particular, we propose a new simple rate-optimal series expansion for fractional Brownian motion. The convergence of the latter series holds in mean square and uniformly almost surely, with a rate-optimal decay of the remainder of the series. We also develop a general framework of convergent series expansions for certain classes of Gaussian processes with stationarity. Finally, an application to optimal functional quantization is described.
\end{abstract}

\begin{keyword}
\kwd{fractional Brownian motion}
\kwd{fractional Ornstein-Uhlenbeck}
\kwd{Karhunen-Lo\`{e}ve decomposition}
\kwd{Fourier series}
\kwd{functional quantization}
\end{keyword}
\end{frontmatter}


\section{Introduction}
Let $B$ = $(B_{t})_{t\in\mathbb{R}^{+}}$ be a centered Gaussian process. $B$ is called fractional Brownian motion (fBm) with Hurst exponent $H \in \left(0,1\right)$ if it has the following covariance structure
\[ \forall t,s\in\mathbb{R}^{+},\quad \mathbf{E}\left(B_{s}B_{t}\right) = \frac{1}{2}\bigg( t^{2H} + s^{2H} - |t-s|^{2H}\bigg). \]
Fractional Brownian motion is a self-similar process i.e.  $\forall c,t>0, \; B_{ct}\overset{\textbf{d}}{=}c^{H}B_{t}$, with stationary increments i.e. $\forall s,t>0, \; B_{t}-B_{s}\overset{\textbf{d}}{=}B_{t-s}$, where $\overset{\textbf{d}}{=}$ denotes equality in distribution. When $H=1/2$, fractional Brownian motion coincides with the standard Brownian motion.
Sample paths of fBm are almost surely H\"{o}lder-continuous of any order strictly less than $H$, and hence are almost surely everywhere continuous. 

One of the main challenges with fBm is its simulation, as it is the case for Gaussian processes with a complex covariance structure in general. The circulant embedding method, described in \citep{embed}, is one of the most efficient algorithms to simulate either stationary Gaussian processes or Gaussian processes with stationary increments on a finite interval $[0,T]$ for some $T>0$. In particular, the latter algorithm has an $N\log N$ complexity, where $N$ is the number of time steps discretizing $[0,T]$. This complexity is to be compared with linear complexity for the standard Brownian motion due to the independence of its increments. Besides, circulant embedding does not allow local refinement.

Alternative approximation methods to simulate a Gaussian process involve its Karhunen-Lo\`{e}ve expansion. The latter expansion is explicitly known for some processes such as the Brownian motion, the Brownian bridge \citep{bridge} and the Ornstein-Uhlenbeck process \citep{OU}, to name a few. Unfortunately, this expansion is not explicit for fBm. In order words, we do not know explicit characterization of eigenvalues $(\nu_k)_k$ and eingevectors of the Karhunen-Lo\`{e}ve expansion of fBm. It is worth mentionning here that exact first order asymptotics of the latter eigenvalues are known (cf. \citep{bronski1,bronski2,brunski4,bronski3}), and more recently second order  asymptotics were derived (cf. \citep{chigansky2018exact}). We give below the first order asymptotics of $(\nu_k)_k$, for $H\in(0,1)$, that are given by
\begin{equation}\label{eq:asymp}
   \nu_k \sim \Gamma(2H+1) \sin(\pi H) (k\pi)^{-2H-1},
\end{equation}
where $\Gamma$ is the gamma function. Such sharp asymptotic results are useful to control precisely the error of approximation, when using series expansion to generate fBm, among other applications.

{\bf Notation.}
In the rest of this paper we use the following notation. For given positive sequences $a_{n}$ and $b_n$, we say that $a_{n} = \mathcal{O}(b_{n})$ (resp $a_{n} = \Omega(b_n)$) when $a_{n} \leq c b_{n}$ (resp $a_{n} \geq c b_{n}$) for some absolute constant $c>0$ and $a_n \sim b_n$ when $a_n/b_n \to 1$. We write $a_{n} \asymp b_{n} $ when $a_{n} = \mathcal{O}(b_{n})$ and $a_{n}=\Omega(b_{n})$. For $X\in \mathbb{R}^{p}$, we denote by $\|X\|$ the Euclidean norm of $X$. For $x,y \in \mathbb{R}$, we denote by $x\vee y$ the maximum value between $x$ and $y$. In particular $x \vee 0$ will be denoted by $x_{+}$. $\Re(z)$ denotes the real part of complex variable $z$. Finally, we denote by $C[0,T]$ the space of continuous functions on $[0,T]$ endowed with the sup-norm.

\subsection{Related literature}
There are various approaches to approximate fBm such as \citep{leon2020stratonovich} and \citep{li2011approximations} to name a few. A complete overview of these approximations falls beyond the scope of this paper. This work focuses on the approximations based on series expansions. In \citep{ayache}, one of the first rate-optimal series expansion of fBm based on Wavelet series approximations is presented. For the sake of brevity, we only present , in what follows, trigonometric series approximations since they can be compared to the framework we expose further.  

The first trigonometric series expansion for fBm on $[0,1]$ was discovered in \citep{ZT}. For $H \in (0,1)$, the series $(B^{H}_{t})_{t\in[0,1]}$ is given by
\[ B^{H}_{t} = \sum_{n=1}^\infty \frac{\sin(x_{n}t)}{x_{n}}X_{n} + \sum_{n=1}^\infty \frac{1 - \cos( y_{n}t)}{y_{n}}Y_{n},\quad t\in[0,1], \]
where $(X_{n})_{n\geq 1}$ and $(Y_{n})_{n\geq 1}$ are centered independent Gaussian random variables, $(x_{n})_{n\geq 1}$ is the sequence of positive roots of the Bessel function $J_{-H}$, and $(y_{n})_{n\geq 1}$ the sequence of positive roots of the Bessel function $J_{1-H}$. The variance of the Gaussian variables involved in the series is given by 
$$ \forall n \geq 1, \quad Var(X_{n}) =  2c_{H}^{2}x_{n}^{-2H}J_{1-H}^{-2}(x_{n}) \quad \text{and} \quad Var(Y_{n}) =  2c_{H}^{2}y_{n}^{-2H}J_{-H}^{-2}(y_{n}),$$
where $c_{H}^{2}=\pi^{-1}\Gamma(1+2H)\sin(\pi H)$. Dzhaparidze and Van Zanten \citep{ZT} prove rate-optimality of the above series expansion in the following sense.
\begin{definition}
Let $H\in(0,1)$ and $B^H$ a fBm with Hurst exponent $H$ on $[0,T]$ for some $T>0$. Assume that $B^{H}$ is given by the series expansion
\[ \forall t \in [0,T], \quad B^H_{t} = \sum_{i=0}^{\infty}Z_{i}e_{i}(t),\]
where  $(Z_{i})_{i\in\mathbb{N}}$ is a sequence of centered independent Gaussian random variables and $(e_{i})_{i\in\mathbb{N}}$ a sequence of continuous deterministic functions. $B^{H}$ is said to be uniformly rate-optimal if
\[ \mathbf{E}\underset{t\in[0,T]}{\sup}\left|\sum_{i=N}^{\infty}Z_{i}e_{i}(t)\right|\asymp N^{-H}\sqrt{\log{N}}. \]
\end{definition}
In \citep{KH}, the rate $N^{-H}\sqrt{\log{N}}$ is shown to be optimal. In other words, rate-optimality means that no other series expansion of fBm can achieve a faster rate of convergence. We explain later how rate-optimality implies uniform convergence of the series, almost surely.

Another rate-optimal trigonometric series expansion for fBm, in the case $H\in(1/2,1)$, is derived in \citep{GL}, that is close to our representation. For $H\in(1/2,1)$, this expansion takes the form
\[  B_{t} = \sqrt{a_{0}}tX_{0} + \sum_{k=1}^\infty \sqrt{a_{k}} \left ( \sin(k\pi t) X_{k} + (1-\cos(k\pi t)) X_{-k}   \right), \quad t\in[0,1],  \]
where
\begin{equation}\label{eq:inglot1}
 a_{0} = \frac{\Gamma(2-2H)}{B(H-\frac{1}{2},\frac{3}{2}-H)(2H-1)} ,  
 \end{equation}
 \begin{equation}\label{eq:inglot2}
 \forall k \geq 1 \, \quad a_{k} = \frac{\Gamma(2-2H)}{B(H-\frac{1}{2},\frac{3}{2}-H)}2\Re(i\exp^{-i\pi H}\gamma(2H-1,ik\pi))(k\pi)^{-2H-1} ,  
\end{equation}
and $(X_{k})_{k\in\mathbb{Z}}$ is a sequence of independent standard Gaussian random variables. The functions $\Gamma$, $B$, and $\gamma$ are the Gamma, Beta, and complementary (lower) incomplete Gamma functions,
respectively. Even if this representation is easier to evaluate than the previous one, it still requires computation of special functions and only holds for $H\in(1/2,1)$.
\subsection{Main contribution}
In this paper, we give a constructive representation of fBm for all $H\in(0,1)$ which is only based on harmonic analysis of its covariance function. Our approach is inspired from the Karhunen-Lo\`{e}ve expansion. The latter expansion is obtained through
an interesting application of the spectral theorem for compact normal operators, in conjunction
with Mercer\textquotesingle s theorem. We give here a sketch of its proof for the reader's convenience. Let $K_{B}(.,.)$ be the covariance function of some process $B$ of interest on $[0,1]^{2}$. Mercer\textquotesingle s theorem is a series representation of $K_{B}$ 
based on the diagonalization of the following linear operator
$$\begin{array}{ccccc}
T_{K_{B}} & : & L^{2}\left[0,1\right] & \to & L^{2}\left[0,1\right] \\
 & & f & \to & \int_{0}^{1}K_{B}\left(s,.\right)f(s)\mathrm{d}s,\\
\end{array}$$
where $L^{2}[0,1]$ is the space of square-integrable real-valued functions on $[0,1]$.
In particular, it states that there is an orthonormal basis $(e_{i})_{i\in \mathbb{N}}$ of $L^{2}[0, 1]$ consisting of eigenfunctions of $T_{K_{B}}$ such that the corresponding sequence of eigenvalues $(\lambda_{i})_{i\in\mathbb{N}}$ is nonnegative. Moreover $K_{B}$ has the following representation
$$ \forall s,t \in [0,1], \quad K_B(s,t) = \sum_{i=0}^{\infty} \lambda_{i}e_{i}(t)e_{i}(s),$$
where the series converges in $L^{2}$.
Considering $(Z_{i})_{i \in \mathbb{N}}$ a sequence of independent standard Gaussian random variables, and assuming the uniform convergence of the following series
$$ \forall t \in [0,1],\quad X_{t} = \sum_{i=0}^{\infty}\sqrt{\lambda_{i}}Z_{i}e_{i}(t), $$
one may observe that $X$ is a centered Gaussian process on $[0,1]$ with a covariance function equal to $K_{B}$. Hence $X$ and $B$ have the same distribution on $[0,1]$. Since the corresponding eigenfunction sequence $(e_{i})_{i\in\mathbb{N}}$ is not explicit for fBm, we follow an alternative approach. 

In the case where $B$ is a stationary process or has stationary increments, $T_{K_{B}}$ becomes similar to a convolution operator. It is well known that the Fourier basis is a basis of eigenfunctions for the convolution operator, when the convolution kernel is periodic. In general, we may extend the kernel to an even periodic kernel. Since this modification applies to the covariance function $K_{B}$, there is no guarantee that the new symmetric function $\tilde{K}_{B}(.,.)$ is positive. In particular, the corresponding eigenvalues are not necessary positive. The last condition is crucial in our approach, since we need to take square root of the Fourier coefficients, as described in the Karhunen-Lo\`{e}ve proof. 

One of our main contributions, is to exhibit a new class $\Gamma$ of functions such that the Fourier coefficients are not only negative, but also lead to rate-optimal series expansions of Gaussian processes with stationarity. Let $T>0$ and  $\gamma$ be a real valued function on $(0,T]$. We say that $\gamma$ satisfies property ($\star$) if
\begin{itemize}
        \item $\gamma$  is continuously differentiable, increasing and concave. 
        \item $x^{\delta}\gamma'(x) = \mathcal{O}\left( 1 \right)$ as $x\to0^{+}$, for some $\delta\in[0,2)$.
\end{itemize}
We denote by $\Gamma$ the class of such functions. This class turns out to be large enough to include many examples of interest as we explain in Section \ref{sec:gen}.
As a consequence, we derive a new series expansion for fBm given by
\[  B^{H}_{t} = \sqrt{b_0}tX_{0} +  \sum_{k=1}^\infty \sqrt{b_k} \left( \sin(k\pi t) X_{k} +  \left(1-\cos(k\pi t)\right) X_{-k} \right), \quad t\in[0,1],  \]
where
\begin{equation}\label{eq:simo1}
\begin{cases}&  b_{0} := 0,\quad H<1/2 \\ & b_{0} := H, \quad H>1/2,
\end{cases}
\end{equation}
\begin{equation}\label{eq:simo2}
\forall k \geq 1, \quad \begin{cases}& b_{k} := -\int_0^1 t^{2H}\cos(k\pi t)\mathrm{d}t ,\quad H<1/2 \\ & b_{k} := \frac{2H(2H-1)}{(k\pi)^{2}}\int_0^1 t^{2H-2}\cos(k\pi t)\mathrm{d}t, \quad H>1/2,
\end{cases}
\end{equation}
and $(X_{k})_{k \in \mathbb{Z}}$ is a sequence of independent standard Gaussian random variables. This series is rate-optimal and its convergence holds uniformly almost surely. More generally, we also derive series expansion for a general class of Gaussian processes with covariance operator linked with the class $\Gamma$. Let us emphasize here that our series expansion in different from the one ine \citep{GL}. Indeed, it is enough to compare the first terms of the two series expansions to be convinced. We show, in Lemma \ref{lem:compare}, that for $H \in (1/2,1)$ we get that
\[
\frac{b_0}{a_0} = \Gamma(2H+1) \sin(\pi H).
\]
More interestingly, we also show in Lemma \ref{lem:compare} that, for $H\in(0,1)$, we have
\[
b_k \sim \Gamma(2H+1) \sin(\pi H) (k\pi)^{-2H-1}.
\]
This in particular implies that $b_k \sim \nu_k$ where $\nu_k$ was defined in \eqref{eq:asymp} as the exact first order asymptotic of eigenvalues of the Karhunen-Lo\`{e}ve decomposition for fBm. Hence our series expansion is not only rate-optimal but is asymptotically equivalent to the Karhunen-Lo\`{e}ve decomposition in terms of the squared mean error. This also implies that the error of quantization behaves asymptotically as if the Karhunen-Lo\`eve expansion were used, which is another advantage of our expansion.

Section \ref{sec:har} is devoted to some preliminaries. In Section \ref{sec:main}, we present our series expansion for fBm, where we prove both uniform convergence and rate-optimality. Next, we generalize this series expansion to a large class of Gaussian processes, before applying it to functional quantization. All proofs are deferred to the Appendix.

\section{ Preliminaries }\label{sec:har}
We start this section by presenting general harmonic properties of the class $\Gamma$. Let $T>0$ and $\gamma \in \Gamma$. Consider the corresponding Fourier sequence
\begin{equation}\label{eq:defck}
\forall k \in \mathbb{N}, \quad c_{k}(\gamma) := \frac{2}{T}\int_0^T \gamma(t)\cos{\frac{k\pi t}{T}}\mathrm{d}t.
\end{equation}
The next Proposition states some important properties of $c(\gamma) = (c_{k}(\gamma))_{k \in \mathbb{N}}$ when $\gamma$ satisfies ($\star$). In what follows, we write for brevity $c_{k}$ = $c_{k}(\gamma)$, as long as there is no ambiguity.
\begin{proposition}\label{pr:1}
Let $\gamma$ be a function satisfying ($\star$) and $c(\gamma)$ the sequence defined in \eqref{eq:defck}, then
\begin{itemize}
\item $c(\gamma)$ is well defined.
\item $\forall k \geq 1, \;$ $c_{k} \leq 0$.
\item $|c_{k}| = \mathcal{O}\left( \frac{1}{k^{2-\delta}} \right)$.
\end{itemize}
\end{proposition}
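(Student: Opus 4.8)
The plan is to reduce all three claims to a single integration by parts that transfers the oscillation onto the derivative $\gamma'$. For well-definedness, I would first note that since $\gamma$ is increasing it is bounded above by $\gamma(T)$, while the growth condition $x^{\delta}\gamma'(x)=\mathcal{O}(1)$ from ($\star$) controls its decay near the origin: integrating $\gamma'(t)\le Ct^{-\delta}$ over $[x,T]$ gives $|\gamma(x)|=\mathcal{O}(x^{1-\delta})$ when $\delta>1$ (and $\mathcal{O}(\log(1/x))$ or outright boundedness when $\delta\le1$). Because $\delta<2$, in every case $\gamma$ is absolutely integrable on $(0,T]$, so each $c_{k}$ in \eqref{eq:defck} is a finite number.

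For the sign, I would integrate \eqref{eq:defck} by parts on $[\varepsilon,T]$ with $u=\gamma$ and $\mathrm{d}v=\cos\frac{k\pi t}{T}\,\mathrm{d}t$. The boundary term at $T$ vanishes since $\sin(k\pi)=0$, while the term at $\varepsilon$ equals $\gamma(\varepsilon)\frac{T}{k\pi}\sin\frac{k\pi\varepsilon}{T}$, which is $\mathcal{O}(\varepsilon^{2-\delta})\to0$ by the estimate above together with $\sin x\le x$. Letting $\varepsilon\to0$ yields
\[ c_{k}=-\frac{2}{k\pi}\int_0^T\gamma'(t)\sin\frac{k\pi t}{T}\,\mathrm{d}t, \]
the remaining integral being absolutely convergent since its integrand is $\mathcal{O}(t^{1-\delta})$ near $0$. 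After the substitution $s=k\pi t/T$ this is $\frac{T}{k\pi}\int_0^{k\pi}g(s)\sin s\,\mathrm{d}s$ with $g(s)=\gamma'(Ts/(k\pi))$ nonnegative and decreasing, the latter because concavity of $\gamma$ makes $\gamma'$ decreasing. Writing the integral as the alternating sum $\sum_{j=0}^{k-1}(-1)^{j}a_{j}$, where $a_{j}=\int_0^\pi g(j\pi+\tau)\sin\tau\,\mathrm{d}\tau\ge0$, monotonicity of $g$ forces $a_{0}\ge a_{1}\ge\cdots\ge0$, so the alternating sum lies in $[0,a_{0}]$; in particular it is nonnegative, which gives $c_{k}\le0$.

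The very same bound produces the decay. From $0\le\int_0^{k\pi}g(s)\sin s\,\mathrm{d}s\le a_{0}$ I obtain
\[ 0\le\int_0^T\gamma'(t)\sin\frac{k\pi t}{T}\,\mathrm{d}t\le\int_0^{T/k}\gamma'(t)\sin\frac{k\pi t}{T}\,\mathrm{d}t\le\frac{k\pi}{T}\int_0^{T/k}\gamma'(t)\,t\,\mathrm{d}t, \]
using $\sin x\le x$; inserting $\gamma'(t)\le Ct^{-\delta}$ and integrating $t^{1-\delta}$ over $[0,T/k]$ gives a term of order $k\cdot k^{-(2-\delta)}=k^{-(1-\delta)}$, and multiplying by the prefactor $2/(k\pi)$ delivers $c_{k}=\mathcal{O}(k^{-(2-\delta)})$. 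I expect the sign statement to be the main obstacle: one integration by parts is not enough on its own, and the nonnegativity genuinely rests on pairing the half-periods of the sine and exploiting that $\gamma'$ is monotone, which is exactly where concavity enters, all while tracking the integrability at the origin when $\delta\in[1,2)$.
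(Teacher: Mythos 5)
Your proposal is correct and follows essentially the same route as the paper's proof: integrability of $\gamma$ from the growth condition $x^{\delta}\gamma'(x)=\mathcal{O}(1)$, one integration by parts transferring the oscillation onto $\gamma'$, the decomposition of $\int_0^{k\pi}\gamma'\big(\tfrac{Ts}{k\pi}\big)\sin s\,\mathrm{d}s$ into half-periods whose alternating terms decrease by concavity (giving $c_k\le 0$), and the bound by the first half-period term combined with $\gamma'(t)=\mathcal{O}(t^{-\delta})$ for the decay $c_k=\mathcal{O}(k^{\delta-2})$. The only differences are cosmetic — you integrate by parts before substituting and regularize the boundary term at $\varepsilon$ explicitly, whereas the paper substitutes first and uses $\int_0^{\pi}u^{-\delta}\sin u\,\mathrm{d}u<\infty$ in place of your $\sin x\le x$ estimate.
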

 It is usually not easy to reveal the sign of the Fourier coefficients of a given function. Nonetheless, this question can be answered for convex (or equivalently concave) functions. It turns out that these coefficients are positive for convex functions as claimed in \citep{wang1985convex}. For the class of functions satisfying $(\star)$, not only we can reveal the sign of these coefficients but also their asymptotic behaviour as shown in Proposition \ref{pr:1}. 
 It is also interesting to notice that for $\gamma \in \Gamma$, the singularity around $0^{+}$, captures the asymptotic behaviour of $c(\gamma)$. A thing that does not hold in general.

Inspection of the proof of Proposition \ref{pr:1}, shows that $\gamma$ has a finite limit at $0^{+}$, whenever $\delta \in [0,1)$.We will use in that case the notation $\gamma(0):=\underset{ x\to 0^{+} }{\lim} \gamma(x)$. The next lemma gives a useful Fourier expansion for functions in $\Gamma$.
\begin{lemma}\label{lem:1}
Let $\gamma$ be a function satisfying $(\star)$ for some $\delta \in [0,1)$. Then
\[ \forall t \in [-T,T], \quad \gamma(|t|) = \gamma(0) + \sum_{k=1}^\infty c_{k}\bigg(\cos\frac{k\pi t}{T}-1\bigg), \]
where convergence holds uniformly.
\end{lemma}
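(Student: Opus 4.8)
The plan is to recognize the stated series as a rewriting of the classical Fourier cosine series of the even extension of $\gamma$, with the constant term pinned down by evaluation at $t=0$. First I would extend $\gamma$ to the even function $t \mapsto \gamma(|t|)$ on $[-T,T]$. Since $\delta < 1$ guarantees, by the argument indicated after Proposition \ref{pr:1}, that $\gamma(0) = \lim_{x\to 0^{+}}\gamma(x)$ is finite, this extension is continuous on $[-T,T]$, and its even $2T$-periodic continuation is continuous as well (both endpoints agree at the value $\gamma(T)$). By construction the coefficients $c_{k}$ defined in \eqref{eq:defck} are exactly the Fourier cosine coefficients of this extension: $c_{0}/2$ is the projection of $\gamma(|\cdot|)$ onto the constant function and $c_{k}$ its projection onto $\cos(k\pi t/T)$.

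The crucial input is the decay estimate from Proposition \ref{pr:1}, namely $c_{k} = \mathcal{O}(k^{-(2-\delta)})$. Because $\delta \in [0,1)$ we have $2-\delta > 1$, so $\sum_{k\geq 1}|c_{k}| < \infty$. By the Weierstrass M-test the series $S(t) := \frac{c_{0}}{2} + \sum_{k=1}^{\infty} c_{k}\cos\frac{k\pi t}{T}$ then converges absolutely and uniformly on $[-T,T]$ to a continuous function. Since the cosine system $\{1,\cos(k\pi t/T)\}_{k\geq 1}$ is an orthogonal basis of $L^{2}([0,T])$ and the partial sums of $S$ are precisely the orthogonal projections of $\gamma(|\cdot|)$ onto its span, these partial sums converge to $\gamma(|\cdot|)$ in $L^{2}$; uniform convergence upgrades this to pointwise agreement almost everywhere, and as both $S$ and $\gamma(|\cdot|)$ are continuous they must coincide at every $t \in [-T,T]$.

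It then remains to put $S$ in the advertised form. Evaluating the now everywhere-valid identity $\gamma(|t|) = \frac{c_{0}}{2} + \sum_{k=1}^{\infty} c_{k}\cos\frac{k\pi t}{T}$ at $t=0$ gives $\gamma(0) = \frac{c_{0}}{2} + \sum_{k=1}^{\infty} c_{k}$, where rearranging the sum is permitted by the absolute convergence $\sum|c_{k}| < \infty$. Solving for $c_{0}/2$ and substituting back yields
\[ \gamma(|t|) = \Big(\gamma(0) - \sum_{k=1}^{\infty} c_{k}\Big) + \sum_{k=1}^{\infty} c_{k}\cos\frac{k\pi t}{T} = \gamma(0) + \sum_{k=1}^{\infty} c_{k}\Big(\cos\frac{k\pi t}{T} - 1\Big), \]
which is exactly the claimed expansion.

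I expect the main obstacle to be the rigorous justification that the Fourier cosine series converges to $\gamma(|t|)$ pointwise \emph{everywhere} rather than merely in $L^{2}$, since it is the pointwise identity that the lemma asserts. This is precisely where the hypothesis $\delta < 1$ does real work: it is what makes $\gamma(0)$ finite, so that the even extension is genuinely continuous, and simultaneously what makes $\sum|c_{k}|$ summable, so that uniform convergence is available through the M-test. Without absolute summability one would be forced to invoke a finer pointwise-convergence theorem (such as Dirichlet--Jordan, using that $\gamma$ is monotone and hence of bounded variation) and to treat the endpoints separately; the decay bound from Proposition \ref{pr:1} lets me sidestep these technicalities entirely.
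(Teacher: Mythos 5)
Your proposal is correct and follows essentially the same route as the paper: even $2T$-periodic extension of $\gamma(|\cdot|)$, the decay $c_{k}=\mathcal{O}(k^{-(2-\delta)})$ from Proposition \ref{pr:1} giving normal (hence uniform) convergence for $\delta\in[0,1)$, and evaluation at $t=0$ to trade the constant term for the $\sum c_{k}(\cos\frac{k\pi t}{T}-1)$ form. Your only addition is the explicit $L^{2}$-plus-continuity argument identifying the uniform limit of the Fourier series with $\gamma(|t|)$, a standard step the paper leaves implicit.
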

We present now a result that characterizes the rate of convergence of given series expansions. Let $\lambda:=(\lambda_{k})_{k\in\mathbb{N}}$ be a sequence of real numbers and $e:=(e_{k})_{k\in\mathbb{N}}$ a family of uniformly bounded and continuous functions on $[0,T]$. We say that $(\lambda,e)$ satisfies ($\star \star$) if 
\begin{itemize}
\item $\exists H>0$, such that $|\lambda_{k}| = \mathcal{O}\left(\frac{1}{k^{H+1/2}}\right)$.
\item  $\forall k \in \mathbb{N}$, $e_k$ is $L$-Lipschitz for some $L>0$ or equivalently
$$ \forall s,t \in[0,T], \quad \left|e_{k}(t)-e_{k}(s)\right| \leq L\left|t-s\right|.$$
\end{itemize}
Notice that for $\gamma \in \Gamma$, and setting $(e_k)_{k\geq 0}$ such that $e_k(x) = f(k\pi x)/k$ for $f$ one of the following functions $\{\cos(.),\sin(.),1-\cos(.)\}$, it is easy to check that $(\sqrt{-c(\gamma)},e)$ satisfies $(\star \star)$ for $\delta \in [0,1)$. 
\begin{theorem}\label{thm:1}
Let $(\lambda_{k})_{k\in\mathbb{N}}$ be a sequence of real numbers, $(Z_{k})_{k\in\mathbb{N}}$ a sequence of centered standard Gaussian random variables, and $(e_{k})_{k\in\mathbb{N}}$ a family of continuous functions on $[0,T]$. Assume that $(\lambda,e)$ satisfies ($\star \star$) for some $H>0$. Then the series $\sum_{k=0}^{N}\lambda_{k}e_{k}\left(\frac{k\pi .}{T}\right)Z_{k}$ converges almost surely in $C[0,T]$. Moreover, we have
$$  \mathbf{E} \operatorname*{sup}_{t\in[0,T]}\left|\sum_{k=N}^{\infty}\lambda_{k}e_{k}\left(\frac{k\pi t}{T}\right)Z_{k} \right| = \mathcal{O} \left( N^{-H}\sqrt{\log{N}}\right).
$$
\end{theorem}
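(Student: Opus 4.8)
The plan is to recognize $X_N(t) := \sum_{k=N}^{\infty}\lambda_k e_k\!\left(\frac{k\pi t}{T}\right)Z_k$ as a centered Gaussian process indexed by $t\in[0,T]$ and to control $\mathbf{E}\sup_t|X_N(t)|$ by Dudley's entropy bound, with the almost sure uniform convergence following from Gaussian concentration. Throughout I would write $M:=\sup_{k,x}|e_k(x)|<\infty$ and use the two consequences of $(\star\star)$: the bound $\lambda_k^2=\mathcal{O}(k^{-(2H+1)})$ and the Lipschitz estimate $|e_k(\frac{k\pi s}{T})-e_k(\frac{k\pi t}{T})|\le \frac{L\pi}{T}\,k\,|s-t|$.

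First I would estimate the two quantities that drive the entropy bound: the maximal standard deviation $\sigma_N:=\sup_t(\mathbf{E}X_N(t)^2)^{1/2}$ and the canonical pseudometric $d_N(s,t):=(\mathbf{E}|X_N(s)-X_N(t)|^2)^{1/2}$. Using $|e_k|\le M$ and $\sum_{k\ge N}k^{-(2H+1)}\asymp N^{-2H}$ gives $\sigma_N=\mathcal{O}(N^{-H})$. For $d_N$ I would bound each increment term by the minimum of the Lipschitz bound $(\frac{L\pi}{T}k|s-t|)^2$ and the crude bound $(2M)^2$, split the sum at the index $k^\ast\asymp 1/|s-t|$ where the two cross, and estimate the two pieces separately; both are $\mathcal{O}(|s-t|^{2H})$ when $H<1$ (and $\mathcal{O}(|s-t|^2)$ when $H\ge 1$). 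Hence $d_N(s,t)\le C|s-t|^{\alpha}$ with $\alpha=\min(H,1)$, while trivially $d_N(s,t)\le 2\sigma_N$.

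Next I would apply Dudley's bound $\mathbf{E}\sup_t|X_N(t)|\le \sigma_N + C\int_0^{D}\sqrt{\log N(\varepsilon)}\,d\varepsilon$, where $D\le 2\sigma_N$ is the $d_N$-diameter of $[0,T]$ and $N(\varepsilon)$ its covering number. The Hölder bound on $d_N$ gives $N(\varepsilon)\le C'\varepsilon^{-1/\alpha}$, so $\sqrt{\log N(\varepsilon)}\lesssim\sqrt{\log(1/\varepsilon)}$. The decisive point is that the integral is truncated at the diameter $D\asymp\sigma_N\asymp N^{-H}$ rather than at a fixed scale: evaluating $\int_0^{N^{-H}}\sqrt{\log(1/\varepsilon)}\,d\varepsilon\asymp N^{-H}\sqrt{\log N}$ yields exactly the claimed rate $\mathcal{O}(N^{-H}\sqrt{\log N})$.

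For the almost sure uniform convergence I would run the same entropy estimate on the finite blocks $T_{N,M}(t):=\sum_{k=N}^{M}\lambda_k e_k(\frac{k\pi t}{T})Z_k$; since the canonical metric and variance of $T_{N,M}$ are dominated by those of $X_N$, the bound $\mathbf{E}\|T_{N,M}\|_{\infty}\le CN^{-H}\sqrt{\log N}$ holds uniformly in $M$, so the partial sums are Cauchy in probability in the separable Banach space $C([0,T])$ and the It\^o--Nisio theorem gives almost sure convergence there (alternatively, combining the mean bound with the Borell--TIS inequality and Borel--Cantelli along $N_j=2^j$ shows $\|X_{N_j}\|_{\infty}\to 0$ almost surely). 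The step I expect to be the main obstacle is the canonical-metric computation together with the correct truncation of the chaining integral: one must exploit both bounds on $d_N$ --- the $k$-amplified Lipschitz bound at small scales and the uniform boundedness of the $e_k$ at large scales --- and crucially cut the integral at the diameter $\asymp N^{-H}$. It is precisely this cutoff, with $N(\varepsilon)\asymp\varepsilon^{-1/\alpha}$, that produces the $\sqrt{\log N}$ factor, which would be lost if one integrated a fixed Hölder metric over all scales.
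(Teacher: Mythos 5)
Your proof is correct, and it takes a genuinely different route from the paper's. The paper avoids Dudley's inequality entirely: it splits the tail into dyadic frequency blocks $2^n \le k < 2^{n+1}$ and, on each block, performs a \emph{single-scale} discretization of $[0,T]$ into $2^{2n}$ intervals, bounding the grid maximum by the elementary inequality $\mathbf{E}\max_{1\le i\le M}|X_i| \le c\sqrt{\log M}\,\max_{1\le i\le M}\sqrt{\mathbf{E}X_i^2}$ and the within-interval oscillation by the $k$-amplified Lipschitz estimate; optimizing the grid size gives $\mathbf{E}\sup_t\big|\sum_{k=2^n}^{2^{n+1}-1}\lambda_k e_k(\tfrac{k\pi t}{T})Z_k\big| \le A\sqrt{n}\,2^{-nH}$, and summing over blocks yields $N^{-H}\sqrt{\log N}$. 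Your argument is instead a full multi-scale chaining on the whole tail at once: the frequency split reappears \emph{inside} your canonical-metric computation (the cut at $k^\ast \asymp 1/|s-t|$ between the Lipschitz bound $\lambda_k^2 k^2|s-t|^2$ and the uniform bound $4M^2\lambda_k^2$, giving $d_N(s,t)\le C|s-t|^{\min(H,1)}$ with $C$ uniform in $N$), and your identification of the diameter cutoff $D\le 2\sigma_N \asymp N^{-H}$ as the source of the $\sqrt{\log N}$ factor is exactly right --- integrating the fixed H\"older metric over all scales would indeed lose the rate. What each buys: your route is shorter and conceptually transparent given Dudley's bound; the paper's blockwise argument is self-contained, needing only the finite Gaussian maximal inequality, and both conclude almost sure convergence in $C[0,T]$ via It\^o--Nisio (your Borell--TIS plus Borel--Cantelli alternative also works). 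Two minor points to tidy, neither a gap: at $H=1$ exactly, $\sum_{k\le 1/h}k^{1-2H}$ is logarithmic, so $d_N(s,t)\lesssim |s-t|\sqrt{\log(1/|s-t|)}$, which is harmless since $\sqrt{\log N(\varepsilon)}\lesssim\sqrt{\log(1/\varepsilon)}$ still holds; and to apply Dudley to $X_N$ itself you should first note that $X_N(t)$ exists pointwise as an $L^2$-limit (immediate from $\sum_k\lambda_k^2<\infty$), or simply run the estimate on the finite blocks $T_{N,M}$ uniformly in $M$ throughout, as you already do for the almost-sure part.
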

 In what follows, we will repeatedly use Proposition \ref{pr:1} along with Theorem \ref{thm:1}. In fact, Proposition \ref{pr:1} describes the asymptotic behaviour of $c(\gamma)$ for $\gamma \in \Gamma$, while Theorem \ref{thm:1} characterizes the rate of convergence of given series expansions based on the asymptotic behaviour of $c(\gamma)$. 

\section{Constructing the fractional Brownian motion}\label{sec:main}

In this section, we present our series expansion for fBm and prove its convergence. The construction is based on harmonic decomposition of the auto-covariance function $\gamma$ on $[0,T]$ such that $\gamma(t) = |t|^{2H}$ for some $H\in(0,1)$. As described in the Introduction, diagonalization of the operator $T_{K}$ is not explicit for fbm. In order to benefit from the diagonalization of the convolution operator, we need to extend the auto-covariance function to a periodic function. The resulting function $\tilde{K}(.,.)$ is not guaranteed to be a covariance function. Luckily, harmonic properties of the class $\Gamma$ will be useful to get around this drawback. 

Since our approach does not hold for both cases, we give results separately for both fBm with $H\in(0,1/2)$ and $H\in(1/2,1)$, assuming that the series converge. We prove later the convergence and rate-optimality of these series.
\subsection{The series expansion}
The following theorem gives an explicit series expansion for fBm for $H\in(0,1/2)$, assuming convergence of the series.
\begin{theorem}\label{proof:thm:1}
Let H $\in \left(0,1/2\right)$. Consider the function $\gamma$ given by $ \gamma(t)=t^{2H}$, $\forall t \in [0,T]$ and denote by $c(\gamma)$ the sequence of its Fourier coefficients. Let $B$ be a stochastic process given by the series expansion
\[  \forall t \in [0,T], \quad  B_{t} = \sum_{k=1}^\infty \sqrt{-\frac{c_{k}}{2}} \left( Z_{k} \sin\frac{k\pi t}{T}  +Z_{-k} \left(1-\cos\frac{k\pi t}{T}\right)    \right), \]
where $(Z_{k})_{k\in\mathbb{Z}}$ denotes a sequence of independent standard Gaussian random variables. Then
\[\forall (s,t) \in [0,T]^{2}, \;  \mathbf{E}\left(B_{s}B_{t}\right) = \frac{1}{2}\left( t^{2H} + s^{2H} - |t-s|^{2H}\right).\]
\end{theorem}
In particular, assuming its convergence, the stochastic process $(B_t)_{t\in[0,T]}$ defined in Theorem \ref{proof:thm:1} is Gaussian and has fbm's covariance which implies that it is a fBm on $[0,T]$. The proof of Theorem \ref{proof:thm:1} does not hold for the case $H\in(1/2,1)$. In fact, for $H\in(1/2,1)$, the Fourier coefficients $c(\gamma)$ have alternating signs and we cannot consider the square root of these coefficients anymore. This is basically due to the fact that $\gamma$ does not satisfy property $(\star)$ in this case, since $\gamma$ is convex and increasing. 
One may still notice that $\gamma''$ satisfies $(\star)$. The next lemma, gives a link between $c(\gamma)$ and $c(\gamma'')$. 
\begin{lemma}\label{lem:2}
Let $\gamma$ be a twice differentiable function on $(0,T)$ such that $ \gamma'(0) \neq \gamma'(T)$. Define $f$ such that
$$\forall t \in[0,T], \quad f(t) =  \gamma(t) - \frac{\gamma'(T)-\gamma'(0)}{2T}\left(t+\frac{T\gamma'(0)}{\gamma'(T)-\gamma'(0)}\right)^{2},$$
then $\forall k \geq 1$ we have
$$  c_{k}(f) = \left(\frac{T}{k\pi}\right)^{2}c_{k}(-\gamma'').$$
\end{lemma}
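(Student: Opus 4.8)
The plan is to compute $c_k(f)$ directly from its definition \eqref{eq:defck} by integrating by parts twice, the entire point of subtracting the quadratic being to annihilate the resulting boundary terms. First I would unpack what the auxiliary quadratic does. Writing $q(t) := \frac{\gamma'(T)-\gamma'(0)}{2T}\left(t + \frac{T\gamma'(0)}{\gamma'(T)-\gamma'(0)}\right)^{2}$ so that $f = \gamma - q$, a one-line differentiation gives $q'(t) = \frac{\gamma'(T)-\gamma'(0)}{T}\left(t + \frac{T\gamma'(0)}{\gamma'(T)-\gamma'(0)}\right)$, and evaluating at the endpoints yields $q'(0) = \gamma'(0)$ and $q'(T) = \gamma'(T)$. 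Hence the decisive feature $f'(0) = f'(T) = 0$. Moreover $q'' \equiv \frac{\gamma'(T)-\gamma'(0)}{T}$ is constant, so $f'' = \gamma'' - \frac{\gamma'(T)-\gamma'(0)}{T}$.

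Next I would integrate $c_k(f) = \frac{2}{T}\int_0^T f(t)\cos\frac{k\pi t}{T}\,dt$ by parts twice, fixing $k \geq 1$. In the first step the antiderivative of $\cos\frac{k\pi t}{T}$ is $\frac{T}{k\pi}\sin\frac{k\pi t}{T}$, which vanishes at both $t=0$ and $t=T$; the boundary contribution therefore disappears and one is left with $c_k(f) = -\frac{2}{k\pi}\int_0^T f'(t)\sin\frac{k\pi t}{T}\,dt$. In the second step the antiderivative of $\sin\frac{k\pi t}{T}$ is $-\frac{T}{k\pi}\cos\frac{k\pi t}{T}$, so the boundary term is proportional to $f'(T)(-1)^{k} - f'(0)$. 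Here the choice of $q$ pays off: because $f'(0) = f'(T) = 0$, this term vanishes as well, and what survives is exactly $c_k(f) = -\left(\frac{T}{k\pi}\right)^{2} c_k(f'')$.

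Finally, since $f'' = \gamma''$ minus a constant and $\int_0^T \cos\frac{k\pi t}{T}\,dt = \frac{T}{k\pi}\sin(k\pi) = 0$ for every $k \geq 1$, the constant contributes nothing to the Fourier coefficient, whence $c_k(f'') = c_k(\gamma'')$. Substituting gives $c_k(f) = -\left(\frac{T}{k\pi}\right)^{2} c_k(\gamma'') = \left(\frac{T}{k\pi}\right)^{2} c_k(-\gamma'')$, which is the claim. There is no real obstacle beyond bookkeeping; the only point requiring care is the second boundary term, and the hypothesis $\gamma'(0) \neq \gamma'(T)$ is precisely what makes $q$ well defined and forces $f'$ to vanish at both endpoints, so that the twofold integration by parts goes through cleanly.
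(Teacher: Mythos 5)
Your proof is correct and follows essentially the same route as the paper: double integration by parts, with the quadratic term engineered precisely so that $f'(0)=f'(T)=0$ kills the boundary terms, and the constant $f''-\gamma''=-\frac{\gamma'(T)-\gamma'(0)}{T}$ dropping out by orthogonality of constants to the harmonics $\cos\frac{k\pi t}{T}$, $k\geq 1$. The only difference is cosmetic: you verify $q'(0)=\gamma'(0)$ and $q'(T)=\gamma'(T)$ explicitly, whereas the paper simply asserts that $f$ was constructed so that $f'(0)=f'(T)=0$.
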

Armed with the above lemma, we can not state our next result. The following theorem gives an explicit series expansion for fBm when $H \in (1/2,1)$, again assuming the series convergence.
\begin{theorem}\label{pr:thm:2}
Let H$\in\left(1/2,1\right)$. Consider the function $\gamma$ given by $ \gamma(t)=-2H(2H-1)t^{2H-2}$, $\forall t \in [0,T]$ and denote by $c(\gamma)$ the sequence of its Fourier coefficients. Let $B$ be a stochastic process given by the series expansion
\[\forall t \in [0,T], \quad B_{t} =\sqrt{HT^{2H-2}}tZ_{0} + \sum_{k=1}^{\infty} \frac{T}{k\pi} \sqrt{-\frac{c_{k}}{2}}\left(\sin{\frac{k\pi t}{T}}Z_{k} + \left(1-\cos{\frac{k\pi t}{T}} \right)Z_{-k}   \right), \]
where $(Z_{k})_{k\in\mathbb{Z}}$ denotes a sequence of independent standard Gaussian random variables. Then
\[ \forall (s,t) \in [0,T]^{2}, \;  \mathbf{E}\left(B_{s}B_{t}\right) = \frac{1}{2}\left( t^{2H} + s^{2H} - |t-s|^{2H}\right). \]
\end{theorem}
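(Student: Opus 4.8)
The plan is to mirror the computation of the preceding theorem (the case $H<1/2$), with two extra ingredients — the drift term carrying $Z_0$ and the weights $T/(k\pi)$ — which I will show conspire, via Lemma~\ref{lem:2}, to reproduce the correct covariance. First I would use the independence of the $(Z_k)_{k\in\mathbb{Z}}$ to write $\mathbf{E}B_sB_t$ as the sum of the variance contribution of $Z_0$ and the series coming from the harmonics. The $Z_0$ term contributes $HT^{2H-2}st$, while each summand, after the product-to-sum simplification already used in the $H<1/2$ case, reduces to
\[ \frac{T^2}{(k\pi)^2}\,\frac{c_k}{2}\left[\left(\cos\tfrac{k\pi s}{T}-1\right)+\left(\cos\tfrac{k\pi t}{T}-1\right)-\left(\cos\tfrac{k\pi (t-s)}{T}-1\right)\right]. \]
The key observation is that $\tfrac{T^2}{(k\pi)^2}c_k = c_k(f)$, where $f$ is the function produced by Lemma~\ref{lem:2} applied to $\psi(t)=t^{2H}$: since $\psi'(0)=0$ and $\psi'(T)=2HT^{2H-1}$, one computes explicitly $f(t)=t^{2H}-HT^{2H-2}t^2$, which satisfies $f(0)=0$ and $f'(0)=f'(T)=0$.

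The main obstacle is to justify the Fourier expansion $f(|t|)=f(0)+\sum_{k\geq1}c_k(f)\big(\cos\tfrac{k\pi t}{T}-1\big)$ for $t\in[-T,T]$, because $f$ is \emph{not} in the class $\Gamma$ (its second derivative blows up to $+\infty$ at $0$, so $f$ is convex, not concave, near the origin), and therefore Lemma~\ref{lem:1} does not apply to $f$ directly. I would get around this by noting that Proposition~\ref{pr:1} gives a decay rate for $c_k=c_k(\gamma)$, so that $c_k(f)=\tfrac{T^2}{(k\pi)^2}c_k$ decays fast enough to be summable; hence the cosine series of the even periodic extension of $f$ converges normally and thus uniformly to $f$. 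Evaluating at $t=0$ exactly as in the proof of Lemma~\ref{lem:1} yields $c_0(f)=f(0)-\sum_{k\geq1}c_k(f)$ and the claimed expansion, so that $\sum_{k\geq1}c_k(f)\big(\cos\tfrac{k\pi t}{T}-1\big)=f(|t|)=|t|^{2H}-HT^{2H-2}t^2$. This summability also legitimises the term-by-term rearrangement of the covariance series.

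Finally I would substitute this identity (at $s$, $t$ and $t-s$) back into $\mathbf{E}B_sB_t$, obtaining
\[ \mathbf{E}B_sB_t = HT^{2H-2}st+\tfrac12\big[f(|s|)+f(|t|)-f(|t-s|)\big]. \]
The quadratic pieces of $f$ contribute $HT^{2H-2}st+\tfrac12 HT^{2H-2}\big[-s^2-t^2+(t-s)^2\big]$, and since $(t-s)^2=t^2-2st+s^2$ this equals $HT^{2H-2}st-HT^{2H-2}st=0$; the drift term is cancelled exactly. What remains is $\tfrac12\big(s^{2H}+t^{2H}-|t-s|^{2H}\big)$, as required. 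The only points needing genuine care are the summability argument for $c_k(f)$ (which replaces the inapplicable Lemma~\ref{lem:1}) and the verification of this cancellation.
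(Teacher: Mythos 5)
Your proof is correct and follows essentially the same route as the paper's: Lemma~\ref{lem:2} applied to $t^{2H}$ identifies $\left(\frac{T}{k\pi}\right)^{2}c_{k}$ as the Fourier coefficients of $f(t)=t^{2H}-HT^{2H-2}t^{2}$, the decay $c_{k}/k^{2}=\mathcal{O}\left(k^{-2H-1}\right)$ gives uniform convergence of the cosine expansion of $f$, and the $Z_{0}$ term cancels the quadratic part of $f$ in the covariance computation. Your one refinement --- noting that $f\notin\Gamma$ so Lemma~\ref{lem:1} cannot be invoked verbatim, and substituting the underlying summability-plus-evaluation-at-zero argument --- is precisely what the paper's step ``the Fourier series converges uniformly and we can apply Lemma~\ref{lem:1}'' implicitly relies on, so the two arguments coincide in substance.
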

Similarly, the above stochastic process is a fBm on $[0,T]$. In the following Lemma, we show that the series expansion in Theorem \ref{pr:thm:2} is different from the one ine \citep{GL}. Indeed, it is enough to compare the first terms of the series expansions to be convinced.
\begin{lemma}\label{lem:compare} Let $(a_k)_k$ and $(b_k))_k$ the sequences defined in \eqref{eq:inglot1}-\eqref{eq:inglot2} and \eqref{eq:simo1}-\eqref{eq:simo2} respectively. Then for $H\in (1/2,1)$ we have
\[
\frac{b_0}{a_0} = \Gamma(2H+1) \sin(\pi H).
\]
Moreover, for $H\in (0,1)$, we have
\[
b_k \sim \Gamma(2H+1) \sin(\pi H) (k\pi)^{-2H-1}.
\]
\end{lemma}
This in particular implies that $b_k \sim \lambda_k$ where $\lambda_k$ was defined in \eqref{eq:asymp} as the exact first order asymptotic of eigenvalues of the Karhunen-Lo\`{e}ve decomposition for fBm. Hence our series expansion is not only rate-optimal but is asymptotically equivalent to the Karhunen-Lo\`{e}ve decomposition in terms of squared mean error.
\subsection{Convergence and rate-optimality}
After presenting a new explicit representation of fBm, we prove now its convergence in both mean square error and almost surely. We also show its uniform rate-optimality. For the rest of the section, we denote more precisely by $(c_{k})_{k \geq 0}$ the following sequence
\begin{equation}\label{eq:defffff}
\begin{cases}& c_{0} := 0,\quad \quad\quad\quad H<1/2, \\ & c_{0} := HT^{2H-2}, \quad 1/2<H,
\end{cases}
\end{equation}
and for $k\geq 1$
\begin{equation}\label{eq:deftk2}
\begin{cases}& c_{k} := \frac{2}{T}\int_0^T t^{2H}\cos\frac{k\pi t}{T}\mathrm{d}t ,\quad \quad\quad \quad\quad\quad  H<1/2, \\ & c_{k} := -\frac{4H(2H-1)T}{(k\pi)^{2}}\int_0^T t^{2H-2}\cos\frac{k\pi t}{T}\mathrm{d}t, \quad 1/2<H.
\end{cases}
\end{equation}
One may first notice, based on Proposition \ref{pr:1}, that $|c_{k}|= \mathcal{O}\left(\frac{1}{k^{2H+1}}\right)$. We consider the series expansion constructed in the previous section and that is given by
\begin{equation}\label{eq:deff}
    \forall t \in [0,T], \;  B_{t} = \sqrt{c_{0}}tZ_{0} +  \sum_{k=1}^\infty \sqrt{-\frac{c_{k}}{2}} \left (  Z_{k}\sin\frac{k\pi t}{T} + Z_{-k} \left(1-\cos\frac{k\pi t}{T}\right)    \right).
\end{equation}
The next two theorems show the convergence of the above series in mean square error and almost surely uniformly.
Let $N \geq 1 $ be level of truncation. In what follows, we denote by $B^{N}$ the truncated series of $B$ and that is given by
\begin{equation}\label{eq:tronc}
    B_{t}^{N} = \sqrt{c_{0}}tZ_{0} + \sum_{k=1}^{N} \sqrt{-\frac{c_{k}}{2}} \left( Z_{k}\sin\frac{k\pi t}{T}  + Z_{-k}  \left(1-\cos\frac{k\pi t}{T}\right)   \right).  
\end{equation}
\begin{proposition}\label{prop:11}
Let $B_{t}$ and $B_{t}^{N}$ be defined by \eqref{eq:defffff}-\eqref{eq:tronc} and $H\in(0,1)$. Then,
\[ \operatorname*{sup}_{t\in[0,T]} \sqrt{\mathbf{E}\big(B_{t}-B_{t}^{N} \big)^2}= \mathcal{O}\big(N^{-H}\big).\]
\end{proposition}
Proposition \ref{prop:11} shows the mean square convergence of $B^{N}$. Since $B^{N}$ is a centered Gaussian process, and using the fact that the Gaussian Hilbert space is complete, we deduce that $B$ is a centered Gaussian process with the same covariance as fBm. It follows that $B$ is a fractional Brownian motion on $[0,T]$. We turn now to the question of rate-optimality of the series expansion. 
\begin{proposition}\label{thm:opt}
Let $B$ be the series expansion defined in \eqref{eq:deff}. Almost surely, $B_{t}^{N}$ converges uniformly, and its rate of convergence is given by
\[  \mathbf{E} \operatorname*{sup}_{t\in[0,T]}\big|B_{t}-B_{t}^{N} \big|  \operatorname*{\asymp}  N^{-H}\sqrt{\log(N)} .\]
\end{proposition}
Proposition \ref{thm:opt} implies, in particular, that the new series expansion, we have derived for fBm, is rate-optimal.

\section{Generalization to special Gaussian processes}\label{sec:gen}
In this section, we develop a general framework for series expansions of special Gaussian processes. For these processes, we prove almost sure uniform convergence and give the corresponding rate of convergence. The question of rate-optimality of the presented series expansions is left for further research. We refer the reader to Proposition 4 in \citep{PG} that gives some hints on rate-optimality of our constructions. 

The next theorem generalizes the case of fBm. We derive a series expansion for a class of Gaussian processes with stationary increments. Let $\gamma$ be a function satisfying $(\star)$ for some $\delta \in [0,1)$, and let $X$ be a centered Gaussian process. We say that $X$ is a Gaussian process of type $(A)$, if it is characterized by the following covariance structure
$$
\forall t,s \in [0,T],\quad \mathbf{E}\left(X_{t}X_{s}\right) = \frac{1}{2}\left(\gamma(t) + \gamma(s) - \gamma(|t-s|)\right).
$$
\begin{theorem}\label{theorem1}
Assume that $\gamma$ satisfies $(\star)$ for some $\delta \in [0,1)$, and let $c(\gamma)$ be the sequence of its Fourier coefficients. Let $(Z_{k})_{k \in \mathbb{Z}}$ be a sequence of independent standard Gaussian random variables. Then, the series expansion
$$ X_{t} = \sum_{k=1}^\infty \sqrt{\frac{-c_{k}}{2}} \bigg (Z_{k} \sin\frac{k\pi t}{T}  + Z_{-k}\left(1-\cos\frac{k\pi t}{T}\right)    \bigg), \quad t\in[0,T], $$
converges uniformly in $[0,T]$ almost surely and $X_{t}$ is a Gaussian process of type $(A)$. Moreover its rate of convergence is given by
$$
 \mathbf{E} \operatorname*{sup}_{t\in[0,T]}\big|X_{t}-X_{t}^{N} \big| = \operatorname*{\mathcal{O}} \left( N^{-\frac{1-\delta}{2}}\sqrt{\log(N)} \right), 
$$
where $X_{t}^{N}$ is the truncated series of $X_{t}$.
\end{theorem}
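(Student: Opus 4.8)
The plan is to follow the template already established by the two fBm theorems: first identify the covariance of $X$ to certify that it is of type $(A)$, then invoke Theorem \ref{thm:1} to obtain both almost sure uniform convergence and the stated rate. The only genuinely new work is the bookkeeping that matches the decay exponent of $c(\gamma)$ supplied by Proposition \ref{pr:1} to the exponent $H$ demanded by property $(\star\star)$.

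For the covariance, I would first observe that only the coefficients $c_k$ with $k\geq 1$ enter the series, and that these are unchanged if $\gamma$ is replaced by $\gamma-\gamma(0)$; since $\delta\in[0,1)$ guarantees that $\gamma(0):=\lim_{x\to 0^+}\gamma(x)$ is finite, I may assume without loss of generality that $\gamma(0)=0$. By Proposition \ref{pr:1} each $c_k\leq 0$, so the square roots $\sqrt{-c_k/2}$ are well defined and the series makes sense. Using the independence and unit variance of the $Z_k$, exactly as in the $H<1/2$ computation, I would expand the covariance, collapse the cross terms with the cosine difference formula, and arrive at
\begin{equation*}
\mathbf{E}X_s X_t = \sum_{k=1}^\infty \frac{c_k}{2}\left(\left(\cos\frac{k\pi s}{T}-1\right)+\left(\cos\frac{k\pi t}{T}-1\right)-\left(\cos\frac{k\pi(t-s)}{T}-1\right)\right).
\end{equation*}
Applying Lemma \ref{lem:1} termwise to each of the three sums (legitimate because $\delta<1$ forces $c_k=\mathcal{O}(k^{-(2-\delta)})$, hence normal convergence) replaces them by $\gamma(s)$, $\gamma(t)$ and $\gamma(|t-s|)$ respectively, yielding $\mathbf{E}X_s X_t=\tfrac{1}{2}(\gamma(s)+\gamma(t)-\gamma(|t-s|))$, which is precisely the type $(A)$ covariance. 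Thus $X$ is a centered Gaussian process of type $(A)$.

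For convergence and the rate, I would verify that $(\sqrt{-c(\gamma)},e)$ satisfies $(\star\star)$ and read off the exponent. Proposition \ref{pr:1} gives $c_k=\mathcal{O}(k^{-(2-\delta)})$, hence $\sqrt{-c_k/2}=\mathcal{O}(k^{-(1-\delta)/2-1/2})$, so the first condition of $(\star\star)$ holds with $H=(1-\delta)/2$, which is strictly positive exactly because $\delta<1$. The second condition is immediate, since $\sin$ and $1-\cos$ are $1$-Lipschitz uniformly in $k$. Theorem \ref{thm:1} then delivers the almost sure uniform convergence of the partial sums together with $\mathbf{E}\sup_{t\in[0,T]}|X_t-X_t^N|=\mathcal{O}(N^{-H}\sqrt{\log N})=\mathcal{O}(N^{-(1-\delta)/2}\sqrt{\log N})$, the announced bound.

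The analytic content is carried entirely by the earlier lemmas, so no step presents a real obstacle; the one point demanding care is the bookkeeping of constants and hypotheses. The normalization $\gamma(0)=0$ must be secured before Lemma \ref{lem:1} is applied (otherwise a spurious $-\tfrac{1}{2}\gamma(0)$ survives in the covariance), and one must check that the single assumption $\delta\in[0,1)$ simultaneously guarantees the finiteness of $\gamma(0)$, the normal convergence needed to apply Lemma \ref{lem:1} termwise, and the positivity $H=(1-\delta)/2>0$ required by Theorem \ref{thm:1}. Verifying this triple consistency is the crux of the argument.
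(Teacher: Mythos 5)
Your proof is correct and follows essentially the same route as the paper's: Proposition \ref{pr:1} supplies the sign and the $\mathcal{O}(k^{\delta-2})$ decay of $c_k$, Theorem \ref{thm:1} applied with $H=(1-\delta)/2>0$ yields the almost sure uniform convergence together with the rate $N^{-(1-\delta)/2}\sqrt{\log N}$, and Lemma \ref{lem:1} identifies the covariance. Your normalization $\gamma(0)=0$ is in fact a point of extra care that the paper's own proof omits: applying Lemma \ref{lem:1} literally leaves a residual $-\gamma(0)/2$ in the covariance (consistent with $X_0=0$, which forces $\mathbf{E}X_0X_s=0$, whereas the claimed type-$(A)$ formula would give $\gamma(0)/2$), so the series has the type-$(A)$ structure of $\gamma-\gamma(0)$ rather than of $\gamma$ unless $\gamma(0)=0$ --- your observation makes the statement precise where the paper silently glosses over it.
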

The next class of interest is a subclass of stationary Gaussian processes. Let $\gamma$ be a function such that $-\gamma$ satisfies $(\star)$ for some $\delta \in [0,1)$, and let $X$ be a centered Gaussian process. We say that $X$ is a Gaussian process of type $(B)$, if it is characterized by the following covariance structure
$$
\forall t,s \in [0,T],\quad \mathbf{E}\left(X_{t}X_{s}\right) =  \gamma(|t-s|).
$$
\begin{theorem}\label{theorem2}
Assume that $\gamma$ is such that $-\gamma$ satisfies $(\star)$ for some $\delta \in [0,1)$, and let $c(\gamma)$ be the sequence of its Fourier coefficients. Let $(Z_{k})_{k \in \mathbb{Z}}$ be a sequence of independent standard Gaussian random variables. If $c_{0} \geq 0$, then the series
\[  X_{t}=\sqrt{c_{0}}Z_{0} +  \sum_{k=1}^\infty \sqrt{c_{k}} \bigg ( \sin\frac{k\pi t}{T} Z_{k} + \cos\frac{k\pi t}{T} Z_{-k}   \bigg), \quad t\in [0,T], \]
converges uniformly in $[0,T]$ almost surely, and $X_{t}$ is a Gaussian process of type $(B)$. Moreover the convergence is rate-optimal, and its rate is given by
$$
 \mathbf{E} \operatorname*{sup}_{t\in[0,T]}\big|X_{t}-X_{t}^{N} \big| = \operatorname*{\mathcal{O}}\left( N^{-\frac{1-\delta}{2}}\sqrt{\log(N)} \right) ,
$$
where $X_{t}^{N}$ is the truncated series of $X_{t}$.
\end{theorem}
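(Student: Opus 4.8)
The plan is to follow the template of the type~$(A)$ result (Theorem~\ref{theorem1}), transferring every structural fact to the function $-\gamma$, which is the one lying in $\Gamma$. First I would pin down the sign and decay of the coefficients. Since $-\gamma$ satisfies $(\star)$, Proposition~\ref{pr:1} yields $c_{k}(-\gamma)\le 0$ and $c_{k}(-\gamma)=\mathcal{O}(k^{-(2-\delta)})$ for $k\ge 1$; by linearity of \eqref{eq:defck} we have $c_{k}=c_{k}(\gamma)=-c_{k}(-\gamma)$, so $c_{k}\ge 0$ and $c_{k}=\mathcal{O}(k^{-(2-\delta)})$. Combined with the standing hypothesis $c_{0}\ge 0$, every $\sqrt{c_{k}}$ is well defined and the series makes sense.

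Next I would establish convergence and the rate through Theorem~\ref{thm:1}. Writing $\lambda_{k}=\sqrt{c_{k}}$, the decay above gives $\lambda_{k}=\mathcal{O}(k^{-(2-\delta)/2})=\mathcal{O}(k^{-(H+1/2)})$ with $H=(1-\delta)/2$, which is positive because $\delta\in[0,1)$. The functions $\sin$ and $\cos$ are bounded by $1$ and are $1$-Lipschitz, so each pair $(\sqrt{c},\sin)$ and $(\sqrt{c},\cos)$ satisfies $(\star\star)$ with this $H$. I would then split the tail $X-X^{N}$ into its sine part (driven by $(Z_{k})_{k>N}$) and its cosine part (driven by $(Z_{-k})_{k>N}$), apply Theorem~\ref{thm:1} to each — the constant term $\sqrt{c_{0}}Z_{0}$ is absorbed into $X^{N}$ and leaves no tail — and combine via the triangle inequality to obtain almost sure uniform convergence together with
$$\mathbf{E}\operatorname*{sup}_{t\in[0,T]}\big|X_{t}-X_{t}^{N}\big|=\operatorname*{\mathcal{O}}_{N\to\infty}\big(N^{-(1-\delta)/2}\sqrt{\log N}\big).$$

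For the covariance, independence of $(Z_{k})_{k\in\mathbb{Z}}$ gives
$$\mathbf{E}X_{t}X_{s}=c_{0}+\sum_{k=1}^{\infty}c_{k}\Big(\sin\tfrac{k\pi t}{T}\sin\tfrac{k\pi s}{T}+\cos\tfrac{k\pi t}{T}\cos\tfrac{k\pi s}{T}\Big)=c_{0}+\sum_{k=1}^{\infty}c_{k}\cos\tfrac{k\pi(t-s)}{T},$$
using the cosine subtraction formula. It then remains to recognize the right-hand side as $\gamma(|t-s|)$. Applying Lemma~\ref{lem:1} to $-\gamma\in\Gamma$ (recalling that the constant term of the cosine series is $\tfrac1T\int_{0}^{T}\gamma$, which is precisely the $c_{0}$ assumed nonnegative), the even $2T$-periodic extension of $-\gamma(|\cdot|)$ expands as $-\gamma(|u|)=-c_{0}-\sum_{k\ge1}c_{k}\cos\tfrac{k\pi u}{T}$; negating yields $\gamma(|u|)=c_{0}+\sum_{k\ge1}c_{k}\cos\tfrac{k\pi u}{T}$, matching the display with $u=t-s$. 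Hence $X$ is a centered Gaussian process with covariance $\gamma(|t-s|)$, i.e.\ of type~$(B)$.

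The genuinely new claim, and what I expect to be the main obstacle, is rate-optimality: Theorem~\ref{thm:1} delivers only the upper bound, so one must show that $N^{-(1-\delta)/2}\sqrt{\log N}$ cannot be beaten by any series expansion. The natural route is to read off the local regularity of $X$ from the increment variance $\mathbf{E}(X_{t}-X_{s})^{2}=2\big(\gamma(0)-\gamma(|t-s|)\big)$: since $(-\gamma)'(v)=\mathcal{O}(v^{-\delta})$ one gets $\gamma(0)-\gamma(u)=\mathcal{O}(u^{1-\delta})$, so $X$ exhibits the same small-scale behaviour (regularity index $(1-\delta)/2$) as the fBm with $H=(1-\delta)/2$, for which the lower bound $N^{-H}\sqrt{\log N}$ is established in \cite{KH}. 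I would make this rigorous either by a direct comparison of Kolmogorov/entropy numbers or by invoking the general lower-bound machinery referenced in Proposition~4 of \cite{PG}. The delicate point is upgrading the one-sided estimate on $\gamma(0)-\gamma(u)$ to a genuine two-sided comparison, so that the fBm lower bound can be transferred cleanly; this is the step demanding the most care.
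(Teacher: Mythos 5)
Up to the rate-optimality claim, your argument coincides with the paper's proof, which literally proceeds ``using the same steps as for Theorem~\ref{theorem1}'': Proposition~\ref{pr:1} applied to $-\gamma$ gives $c_{k}=-c_{k}(-\gamma)\geq 0$ and $c_{k}=\mathcal{O}(k^{\delta-2})$; Theorem~\ref{thm:1} with $\lambda_{k}=\sqrt{c_{k}}$ and $H=(1-\delta)/2$ gives the almost sure uniform convergence and the stated upper rate; and the uniformly convergent cosine expansion \eqref{eq:re} of the even extension of $-\gamma(|\cdot|)$ identifies the covariance as $\gamma(|t-s|)$. You even handle the $c_{0}$ normalization correctly (the constant Fourier term is $\frac{1}{T}\int_{0}^{T}\gamma$), a point on which the paper's own definition \eqref{eq:defck} is off by a factor of $2$ relative to the proof of Lemma~\ref{lem:1}.

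The genuine gap is in your plan for rate-optimality. Property $(\star)$ imposes only the \emph{upper} bound $x^{\delta}(-\gamma)'(x)=\mathcal{O}(1)$, and this condition is monotone in $\delta$: if $-\gamma$ satisfies $(\star)$ for some $\delta$, it satisfies it for every $\delta'\in[\delta,1)$. Consequently no lower bound of order $N^{-(1-\delta)/2}\sqrt{\log N}$ can hold under the hypotheses. Concretely, take $\gamma(u)=e^{-u}$ (Ornstein--Uhlenbeck): $-\gamma$ satisfies $(\star)$ with $\delta=1/2$, yet here $c_{k}\asymp k^{-2}$, so the very expansion under consideration converges at rate $N^{-1/2}\sqrt{\log N}$, strictly faster than $N^{-1/4}\sqrt{\log N}$. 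So the two-sided comparison with fBm at $H=(1-\delta)/2$ that you flag as ``the step demanding the most care'' is not merely delicate --- it is false in general, and no entropy or Kolmogorov-width transfer from \cite{KH} can close it. Rate-optimality in this theorem must be read \emph{relative to the process}: the paper's entire argument for it is the observation that the system $\left(1,\sin\frac{k\pi \cdot}{T},\cos\frac{k\pi \cdot}{T}\right)$ is orthogonal, so the argument of Proposition 4 in \cite{PG} applies and shows that the tail of an expansion along an orthogonal system is, up to constants, the smallest achievable among all series expansions of that same process --- whatever that optimal rate turns out to be. This is precisely the alternative you mention only in passing; it should be your main (indeed only) route. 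If you implement it, note that the orthogonality claim itself needs care: on $[0,T]$ one has $\int_{0}^{T}\sin\frac{k\pi t}{T}\cos\frac{j\pi t}{T}\,\mathrm{d}t\neq 0$ whenever $k+j$ is odd, so orthogonality of the full mixed system holds on the period $[-T,T]$ rather than on $[0,T]$.
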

One immediate consequence is a series expansion for a stationary fractional Ornstein-Uhlenbeck process $X_{t}$ with $H\in(0,1/2)$, where a stationary fOU is a centered Gaussian process such that
\[\forall s,t \in[0,T], \quad \mathbf{E}\left(X_{s}X_{t}\right) = e^{-|t-s|^{2H}}. \]
The last series expansion was already derived in \citep{PG}.

The framework we are proposing here can also be applied to Gaussian processes that are neither stationary nor with stationary increments. As an example we apply it to another class of Gaussian processes.  
Let $\gamma$ be a function defined on $(0,2T)$, such that $-\gamma$ satisfies $(\star)$ for some $\delta \in [0,1)$, and let $X_{t}$ be a centered Gaussian process. We say that $X_{t}$ is a Gaussian process of type $(C)$, if it is characterized by the following covariance structure
\begin{equation}\label{eq:tss}
\forall t,s \in [0,T], \; \mathbf{E}\left(X_{s}X_{t}\right) = \frac{1}{2}\bigg( \gamma(|t-s|) - \gamma(t+s)\bigg). 
\end{equation}
\begin{theorem}\label{theorem:3}
Assume that $\gamma$ is such that $-\gamma$ satisfies $(\star)$ on $(0,2T)$ for some $\delta \in [0,1)$, and let $c(\gamma)$ be the sequence of its Fourier coefficients on the interval $(0,2T)$. Let $(Z_{k})_{k \geq 1}$ be a sequence of independent standard Gaussian random variables. Then, the series
\[  X_{t}=\sum_{k=1}^\infty \sqrt{c_{k}}  \sin\frac{k\pi t}{2T} Z_{k} , \quad t\in[0,T],  \]
converges uniformly in $[0,T]$ almost surely, and $X_{t}$ is a Gaussian process of type $(C)$. Moreover its rate of convergence is given by
$$
 \mathbf{E} \operatorname*{sup}_{t\in[0,T]}\big|X_{t}-X_{t}^{N} \big| = \operatorname*{\mathcal{O}} \left( N^{-\frac{1-\delta}{2}}\sqrt{\log(N)} \right) ,
$$
where $X_{t}^{N}$ is the truncated series of $X_{t}$.
\end{theorem}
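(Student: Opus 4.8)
The plan is to follow the same three-step template used for Theorems \ref{theorem1} and \ref{theorem2}, with the crucial modification that everything is carried out on the interval $(0,2T)$ using the half-frequency harmonics $\sin\frac{k\pi t}{2T}$. First I would establish that the series is well defined: since $-\gamma$ satisfies $(\star)$ on $(0,2T)$, Proposition \ref{pr:1} (applied with $2T$ in place of $T$) gives $c_{k}(-\gamma) \leq 0$ for every $k \geq 1$, hence $c_{k} = c_{k}(\gamma) = -c_{k}(-\gamma) \geq 0$ and $\sqrt{c_{k}}$ makes sense. The same proposition yields the decay $c_{k} = \mathcal{O}(k^{-(2-\delta)})$, which also guarantees that $\sum_{k\geq 1}c_{k}$ converges since $2-\delta>1$.

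Next I would obtain convergence and the rate simultaneously by invoking Theorem \ref{thm:1}. Setting $\lambda_{k} = \sqrt{c_{k}}$ and $e_{k} = \sin$, the decay above gives $\lambda_{k} = \mathcal{O}(k^{-(2-\delta)/2})$, so the first bullet of $(\star\star)$ holds with $H = (1-\delta)/2 > 0$; since $t \mapsto \sin t$ is $1$-Lipschitz, the second bullet holds with $L=1$. Applying Theorem \ref{thm:1} (again with $2T$ in place of $T$) then yields almost sure uniform convergence on $[0,T]$ and the announced tail bound $\mathcal{O}(N^{-(1-\delta)/2}\sqrt{\log N})$, whence $X$ is a centered Gaussian process.

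It remains to identify the covariance. Using independence of the $Z_{k}$,
$$\mathbf{E}X_{s} X_{t} = \sum_{k=1}^\infty c_{k} \sin\frac{k\pi s}{2T}\sin\frac{k\pi t}{2T},$$
and the product-to-sum identity $\sin A \sin B = \tfrac{1}{2}(\cos(A-B)-\cos(A+B))$ turns the summand into $\tfrac{1}{2} c_{k}\big(\cos\frac{k\pi(t-s)}{2T}-\cos\frac{k\pi(t+s)}{2T}\big)$. The Fourier expansion of Lemma \ref{lem:1}, applied to $-\gamma$ on $[-2T,2T]$, reads $\gamma(|u|) = \gamma(0) + \sum_{k\geq 1} c_{k}\big(\cos\frac{k\pi u}{2T}-1\big)$, so that $\sum_{k\geq 1} c_{k}\cos\frac{k\pi u}{2T} = \gamma(|u|) - \gamma(0) + \sum_{k\geq 1}c_{k}$ for every $|u|\leq 2T$. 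Evaluating at $u=t-s$ and at $u=t+s$ and subtracting makes the constant $-\gamma(0)+\sum_{k\geq 1}c_{k}$ cancel, leaving exactly $\mathbf{E}X_{s}X_{t} = \tfrac{1}{2}\big(\gamma(|t-s|) - \gamma(t+s)\big)$, which is \eqref{eq:tss}.

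The one point that requires care — and the only genuinely new ingredient compared with Theorems \ref{theorem1} and \ref{theorem2} — is the choice of half-period $2T$. Because $s,t\in[0,T]$, the argument $t+s$ ranges over $[0,2T]$, so it is precisely the expansion on $[-2T,2T]$ (and hence the coefficients $c(\gamma)$ defined on $(0,2T)$) that keeps $t+s$ inside the interval of validity of Lemma \ref{lem:1}; had I used period $T$, the term $\cos\frac{k\pi(t+s)}{T}$ would leave the fundamental domain and the identification would fail. I expect no analytic obstacle beyond this domain bookkeeping, since the sign, decay, and Lipschitz bounds are all inherited verbatim from the earlier results.
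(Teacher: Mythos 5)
Your proof is correct and follows essentially the same route as the paper's: sign and decay of $c(\gamma)$ via Proposition \ref{pr:1}, almost sure uniform convergence with the rate $N^{-(1-\delta)/2}\sqrt{\log N}$ via Theorem \ref{thm:1} applied with $H=(1-\delta)/2$, and the covariance identification via the product-to-sum identity together with Lemma \ref{lem:1} on $[-2T,2T]$. The only difference is that you make explicit what the paper compresses into ``same steps as Theorem \ref{theorem1}'' --- notably the choice of the $4T$-periodic basis so that $t+s\in[0,2T]$ stays inside the domain of validity of Lemma \ref{lem:1}, a point the paper itself defers to the remark following the theorem.
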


\begin{remark}
One may notice that, in Theorem \ref{theorem:3}, we have considered a $4T$-periodic basis instead of $2T$-periodic because $\forall 0 \leq t,s \leq T, \quad 0\leq s+t \leq 2T$.
\end{remark}
In order to illustrate Theorem \ref{theorem:3}, we give below two examples of corresponding series expansions. 
\begin{example}
Karhunen-Lo\`eve expansion of the Brownian motion.\\
In this example, we consider the function $\gamma(t) = -|t|$ on $[0,T]$. It is easy to check that $\gamma$ satisfies the conditions of Theorem \ref{theorem:3}. One may also notice that this process is a Brownian motion on $[0,T]$ since
\[\forall t,s\in [0,T], \quad \frac{1}{2}\left(-|t-s|+|t+s|\right) = \min(t,s). \]
An explicit evaluation of the sequence $(c_{k})_{k \geq 1}$ gives
\begin{equation}
\begin{aligned}
\forall k \geq 1, \quad c_{k} &= \frac{1}{T}\int_{0}^{2T}-t\cos{\frac{k\pi t}{2T}}\mathrm{d}t \\
 &= \frac{2}{k\pi}\int_{0}^{2T}\sin{\frac{k\pi t}{2T}}\mathrm{d}t \\
 &= \left(1-(-1)^{k}\right) \left(\frac{2}{k\pi}\right)^{2}T. 
\end{aligned}
\end{equation}
Applying Theorem \ref{theorem:3}, it follows that
\[ \forall t\in[0,T],\quad X_{t} = \sqrt{2}\sum_{k=1}^{\infty}\frac{\sqrt{T}}{(k-\frac{1}{2})\pi}Z_{k}\sin{\frac{(k-\frac{1}{2})\pi t}{T}},\]
is a series expansion for Brownian motion on $[0,T]$, where $(Z_{k})_{k \geq 1}$ is a sequence of independent standard Gaussian random variables.

\end{example}
\begin{example}
A new series expansion for the generalized Ornstein-Uhlenbeck process. \\
In this example we consider the non-stationary Ornstein-Uhlenbeck process $(Y_{t})_{t\geq0}$ where $Y_{0}$ is a Gaussian random variable with the following distribution $\mathcal{N}(\mu,\sigma_{0}^{2})$. This process is a Gaussian process characterized by
\[ \forall t\geq 0, \quad \mathbf{E}Y_{t} = \mu e^{-\theta t}+\alpha(1-e^{-\theta t}), \]
and\[\forall s,t \geq 0, \quad \mathbf{E}\left(Y_{s}-\mathbf{E}Y_{s},Y_{t}-\mathbf{E}Y_{t}\right) = \sigma_{0}^{2}e^{-\theta(t+s)}+\frac{\sigma^{2}}{2\theta}\left(e^{-\theta(|t-s|)}-e^{-\theta(t+s)} \right), \]
for some $\theta>0 $ and $\alpha,\sigma \in \mathbb{R}$. By setting $\gamma(t)=\frac{\sigma^{2}}{\theta}e^{-\theta t}$, we can see that $-\gamma$ satisfies conditions of Theorem \ref{theorem:3}. Hence, and applying Theorem \ref{theorem:3}, the following expansion
\begin{equation}\label{eq:lasttt}
\forall t\in [0,T], \quad X_{t} = Y_{0}e^{-\theta t}+\alpha(1-e^{-\theta t}) + \sum_{k=1}^{\infty}\sqrt{c_{k}}Z_{k}\sin{\frac{k\pi t}{2T}},
\end{equation}
is a series expansion for the generalized Ornstein-Uhlenbeck process on $[0,T]$, where $(Z_{k})_{k\geq1}$ is a sequence of independent standard Gaussian random variables, that are also independent from $Y_{0}$. 

An explicit evaluation of the sequence $(c_{k})_{k\geq 1}$ gives
\begin{equation}
\begin{aligned}
\forall k\geq 1, \quad \frac{\theta}{\sigma^{2}}c_{k} &= \frac{1}{T}\int_{0}^{2T}e^{-\theta t}\cos{\frac{k\pi t}{2T}}=Re\left(\frac{1}{T}\int_{0}^{2T}e^{(-\theta+i\frac{k\pi}{2T} )t}\mathrm{d}t \right) \\
&= Re\left( \frac{1 - (-1)^{k}e^{-2\theta T}}{\theta T - \frac{i k \pi}{2}}\right) = \frac{1}{1+\left(\frac{k \pi}{2\theta T}\right)^{2}} \frac{1-(-1)^{k}e^{-2\theta T}}{\theta T}.
\end{aligned}
\end{equation}
The expansion \eqref{eq:lasttt} is easier to use compared to the one known so far that are based on zeros of Bessel functions.
\end{example}


\section{Application: \textit{Functional quantization}}
Quantization consists in approximating
a random variable taking a continuum of values in $\mathbb{R}$ by a discrete random variable. While vector quantization deals with finite dimensional random variables, functional quantization
extends the concept to the infinite dimensional setting, as it is the case for stochastic processes. 
Quantization of random vectors can be considered as
a discretization of the probability space, providing in some sense the best
approximation to the original distribution. 
The quantization of a random variable $X$ taking values in $\mathbb{R}$ consists in approximating it by the best discrete random variable $Y$ taking finite values in $\mathbb{R}$. If we set $N$ to be the maximum number of values taken by $Y$, the problem is equivalent to minimizing the following error
\begin{equation}\label{eq:str}
\xi_{N}(X) = \left\{ \mathbf{E}\left( X-\text{Proj}_{\Gamma}(X) \right)^{2}, \quad \Gamma \subset \mathbb{R} \textnormal{ such that } |\Gamma|\leq N \right\}  .
\end{equation}
A solution of \eqref{eq:str} is an $L^{2}$-optimal quantizer of $X$. 

For  a multidimensional Gaussian random variable $X$, optimal quantization is expensive. One way to mitigate this cost, is to consider product-quantization, that is to use a cartesian product of one-dimensional optimal-quantizers of each marginal  as in \citep{carlo}. The resulting quantizer is said to be stationary when marginals of $X$ are independent. In \citep{LAST}, it is shown that Karhunen-Lo\`eve product-quantization, while it is sub-optimal, remains rate-optimal in the case of Gaussian processes.   

We consider now a continuous Gaussian process $(X_{t})_{t\in[0,T]}$ such that $\int_{0}^{T}\mathbf{E}|X_{t}^{2}|\mathrm{d}t <\infty$, and its expansion
\[\forall t\in[0,T], \quad  X_{t}=\sum_{i=0}^{\infty}\lambda_{i}e_{i}(t)Z_{i},\]
where $(\lambda_{i})_{i\in \mathbb{N}}$ is a sequence of real numbers such that $\sum_{i=0}^{\infty}\lambda_{i}^{2}<\infty$, $(e_{i})_{i\in\mathbb{N}}$ is an orthonormal sequence of continuous functions, and $(Z_{i})_{i\in\mathbb{N}}$ a sequence of independent standard Gaussian random variables. Notice that the Karhunen-Lo\`eve expansion is a special case of what we are introducing. In this case the error induced by replacing the process by a rate-optimal quantizer of its truncation up to order $m$ is given by
\[ \xi_{N}(X)^{2} = \int_{0}^{T}\mathbf{E}\left(X_{t}-\sum_{i=0}^{m}\lambda_{i}e_{i}(t)Y_{i}\right)^{2} \mathrm{d}t,  \]
where $\forall \; 0 \leq i \leq m, \; Y_{i}$ is an optimal quantizer of $Z_{i}$ taking $N_{i}$ values and $\prod_{i=0}^{m}N_{i}\leq N$. More precisely we get that
\[ \xi_{N}(X)^{2} = \sum_{i=m+1}^{\infty}\lambda_{i}^{2} + \sum_{i=0}^{m}\xi_{N_{i}}(\mathcal{N}(0,\lambda_{i}^{2})).\] 
If moreover $\lambda_{N}^{2}\asymp\frac{1}{N^{\delta}}$, with $1<\delta < 3$, it is shown in \citep{LH} that, the optimal product-quantization of level $N$ is achieved when the dimension of the quantizer $m$ is of order $\log{N}$ and that it satisfies
\[ \xi_{N}(X)\asymp(\log{N})^{\frac{1-\delta}{2}}.\]
When the basis $(e_{k})_{k \in \mathbb{N}}$, chosen in the series expansion, is not orthonormal, an alternative rate-optimal quantization method is presented in \citep{JG}. The idea consists in truncating the series up to the optimal order $m\asymp \log{N}$ and then considering the finite-dimensional covariance operator $K^{m}$ of the truncation given by
$$
\forall t,s \in [0,T],\quad K^{m}(t,s) = \sum_{i=0}^{m} \lambda_{i}^{2}e_{i}(t)e_{i}(s).
$$
More specifically, consider $H$ a  linear subspace of $L^2$ defined by $H = \text{Span}((e_{i})_{0\leq i \leq m}) $. The operator  $T_{K^{m}}$ is given by
\[ T_{K^{m}} : \begin{cases}& H \to H \\ & f \to \int_{0}^{T}K^{m}(s,.)f(s)\mathrm{d}s. \end{cases}\]
$T_{K^{m}}$ is clearly an endomorphism. 
Unlike in the proof the Karhunen-Lo\`eve theorem, in this case we deal with a linear and symmetric operator in finite dimension. Hence there exists $(\mu_{i}^{m})_{0\leq i \leq m }$ a sequence of positive real numbers and  $(f_{i}^{m})_{0\leq i \leq m }$ an orthonormal basis of $H$ such that
\[\forall t,s \in[0,T] , \quad K^{m}(t,s)=\sum_{i=0}^{m}\mu_{i}^{m}f_{i}^{m}(t)f_{i}^{m}(s). \]
 We can then assert that there exists $(Y_{i}^{m})_{0\leq i \leq m}$ a sequence of independent standard Gaussian random variables such that
$$
\forall t \in [0,T], \quad \sum_{i=0}^{m}\lambda_{i}e_{i}(t)Z_{i} = \sum_{i=0}^{m}\sqrt{\mu_{i}^{m}}f_{i}^{m}(t)Y_{i}^{m} \quad a.s.
$$
Following the same argument as in \citep{JG}, if we set $m \asymp \log{N}$ and replace the process by a rate-optimal quantizer of $\sum_{i=0}^{m}\sqrt{\mu_{i}^{m}}f_{i}^{m}(t)Y_{i}^{m} $, we will get the quadratic quantization error 
$$
\xi_{N}(X)^{2} = \mathcal{O}\left(\sum_{i=m+1}^{\infty}\lambda_{i}^{2} + \sum_{i=0}^{m}\xi_{N_{i}}(\mathcal{N}(0,\mu_{i}^{2}))\right).
$$
Moreover, this error is rate-optimal. Observe that for fBm $\lambda_i^2 = b_i$ and  
\[
 \sum_{i=m+1}^{\infty}b_{i} \sim \sum_{i=m+1}^{\infty}\nu_{i},
\]
according to Lemma \ref{lem:compare}. Hence the error due to the remainder behaves asymptotically as if the Karhunen-Lo\`eve expansion were used, which is another advantage of our expansion.
As an illustration, we give a rate-optimal quantization for both fBm and generalized Ornstein-Uhlenbeck with $T=1$ and $N=20$.\\

\begin{figure}[ht]
\centering
\begin{subfigure}{.5\textwidth}
  \centering
  \includegraphics[width=.8\linewidth]{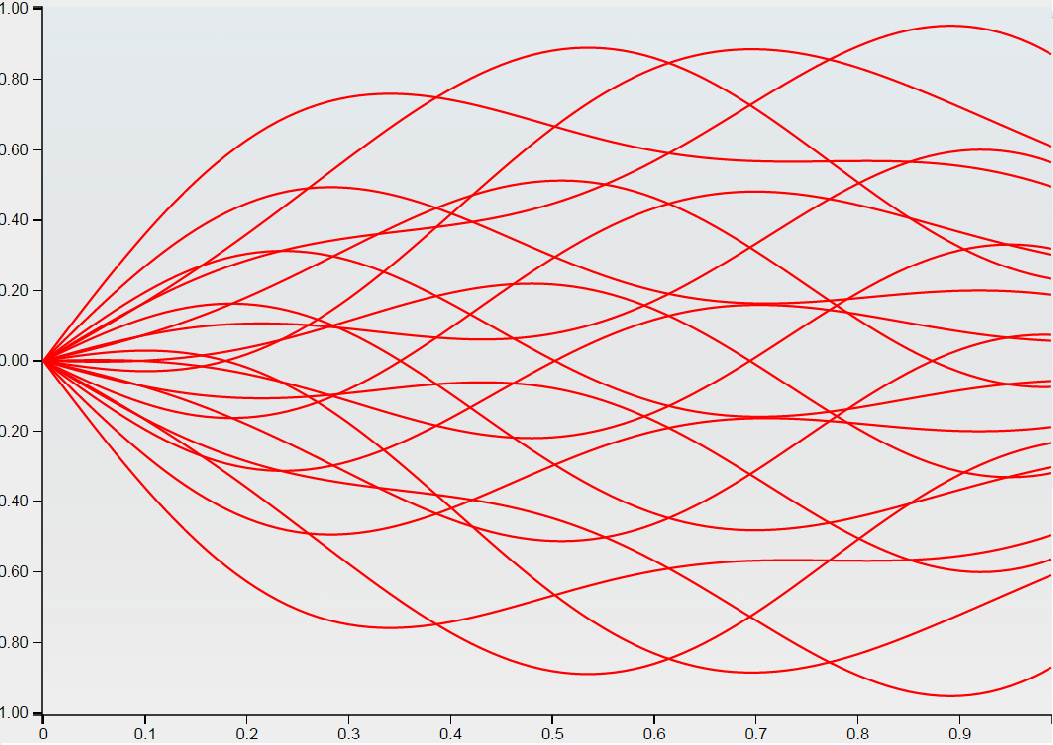}
  \caption{generalized OU $\theta=2,\alpha=0$}
  \label{fig:sub1}
\end{subfigure}%
\begin{subfigure}{.5\textwidth}
  \centering
  \includegraphics[width=.8\linewidth]{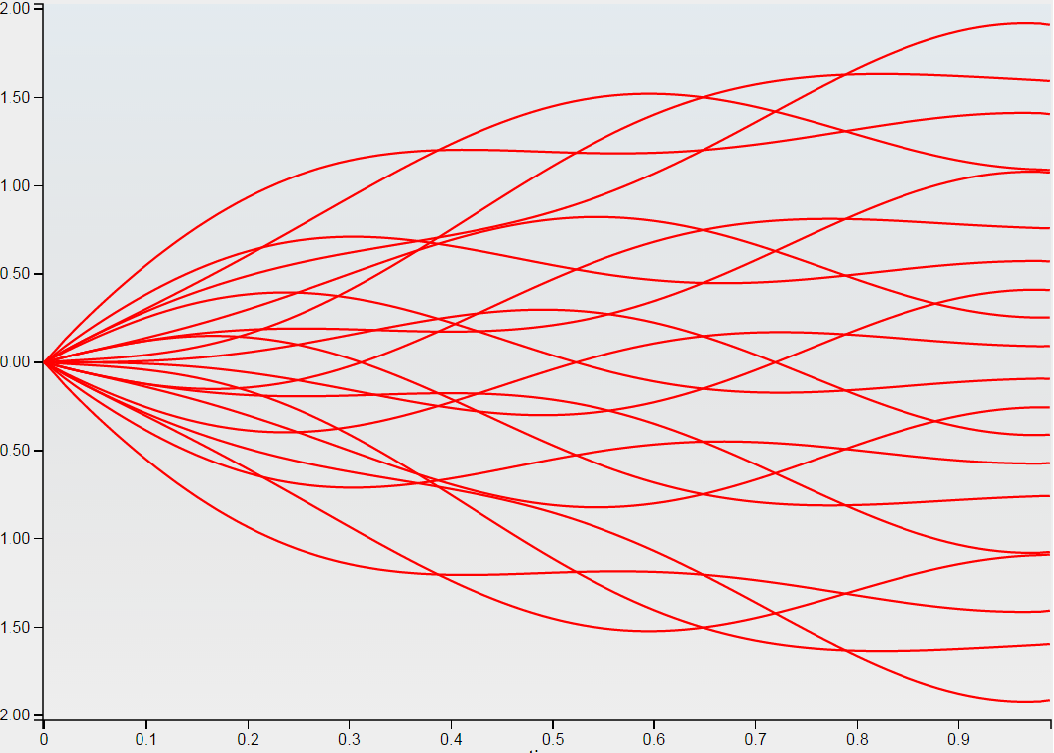}
  \caption{fBm $H=0.4$}
  \label{fig:sub2}
\end{subfigure}
\caption{Product quantization of a centered Ornstein-Uhlenbeck process, starting from $Y_0 = 0$ (left), and a fBm (right).}
\label{fig:test}
\end{figure}
\section{Conclusion}
This paper presents a new framework to derive series expansions for a specific class of Gaussian processes based on harmonic analysis. One of the main results is a new, simple and rate-optimal series expansion for fractional Brownian motion. One of the advantages of this expansion is that its coefficients are easily computed which can reduce the complexity of simulation, especially for the case $H<1/2$ where no other simple trigonometric series expansion is known. Our approach is general and gives series expansions for a large class of Gaussian processes, in particular to the generalized Ornstein-Uhlenbeck process. The application to quantization is interesting in particular for fBm, where the series basis is not orthonormal. In this case, we show how to deal with non-orthonormality and construct a rate-optimal quantizer.\\


    


\begin{appendix}\label{appendix:1}
\section{Proofs of main Theorems}
\label{app:thm}
\begin{proof}[Proof of Proposition \ref{pr:1}]
We first prove that $\gamma$ is integrable. The function $\gamma'$ is continuous on $(0,T]$, positive and for $\delta \in (0,2)$ we have that
there exists $M>0$ and $\epsilon>0$ such that
\begin{equation}\label{eq:pss2}
\forall x \in(0,\epsilon), \quad 0 \leq \gamma'(x)\leq \frac{M}{x^{\delta}}.
\end{equation}
By integrating \eqref{eq:pss2}, we get
$$\gamma(x)=\mathcal{O}\left(1+x^{1-\delta}\right),$$
as $x \to 0^{+}$.
The last result holds also for $\delta=0$. Since $0\leq \delta<2$ and $\gamma$ is continuous, it comes out that $\gamma$ is integrable on (0,T]. It follows that $c(\gamma)$ is well defined.

Before showing the second part, one may first notice that $\gamma'$ is positive and decreasing 
since $\gamma$ is concave and increasing. By a change of variable in \eqref{eq:defck}, we get
$$
 \forall k  \geq 1, \quad c_{k}=\frac{2}{T}\frac{T}{k\pi} \int_0^{k\pi} \gamma\left(\frac{Tu}{k\pi}\right)\cos(u)\mathrm{d}u. 
$$
Since 
$$\gamma(u)\sin(u) = \mathcal{O}\left(\sin(u)+u^{1-\delta}\sin(u) \right), $$
as $u \to 0^{+}$, then 
$$ \lim_{u\to 0^{+}} \gamma(u)\sin(u) = 0. $$
Using integration by parts we obtain, for all $k \geq 1$,
\begin{equation}\label{eq:prt}
\begin{aligned}
        c_{k} &= - \frac{2}{T}\bigg(\frac{T}{k\pi}\bigg)^{2}\int_0^{k\pi} \gamma'\left(\frac{Tu}{k\pi}\right)\sin(u)\mathrm{d}u \\
        &= - \frac{2}{T}\bigg(\frac{T}{k\pi}\bigg)^{2}\;\sum_{n=0}^{k-1}(-1)^{n}\int_0^{\pi} \gamma'\bigg(\frac{T(u+n\pi)}{k\pi}\bigg)\sin(u)\mathrm{d}u. 
       \end{aligned}
\end{equation}
For $0 \leq n < k$, we define 
$$v_{k,n}:=\int_0^{\pi} \gamma'\bigg(\frac{T(u+n\pi)}{k\pi}\bigg)\sin(u)\mathrm{d}u.$$
It is immediate that,
$\forall k  \geq 1, \;(v_{k,n})_{n<k}$ is nonnegative and decreasing with respect to $n$.
Regrouping each pair of elements in \eqref{eq:prt}, we get
$$c_{k} = - \frac{2}{T}\bigg(\frac{T}{k\pi}\bigg)^{2}\left(\sum_{n=0}^{\lfloor\frac{k}{2}\rfloor-1}(v_{k,2n}-v_{k,2n+1})
 + \frac{1-(-1)^{k}}{2}v_{k,k-1}\right).
 $$
It follows that $c_{k} \leq 0,\forall k  \geq 1$.
For the last point, we use again the second part of \eqref{eq:prt} and get
$$
c_{k}= - \frac{2}{T}\bigg(\frac{T}{k\pi}\bigg)^{2}\sum_{n=0}^{k-1}(-1)^{n}v_{k,n}. 
$$
Since the sequence $\left( (-1)^{n}v_{k,n}\right)_{n<k}$ has alternating signs and a decreasing modulus, it turns out that
$$ |c_{k}| \leq \frac{2T^{2}}{k^{2}\pi^{2}} v_{k,0} \leq \frac{2T^{2}}{k^{2}\pi^{2}} \int_0^{\pi} \gamma'\bigg(\frac{Tu}{k\pi}\bigg)\sin(u)\mathrm{d}u. $$
In order to conclude, it is enough to prove that
$$ \int_0^{\pi} \gamma'\bigg(\frac{Tu}{k\pi}\bigg)\sin(u)\mathrm{d}u = \mathcal{O}(k^{\delta}). $$
Since $\gamma\in\Gamma$, we can check that $x \to x^{\delta}\gamma'(x)$ is uniformly bounded on $[0,T]$. Let $M$ be this uniform bound, then
$$ 0 \leq \int_0^{\pi} \gamma'\bigg(\frac{Tu}{k\pi}\bigg)\sin(u)\mathrm{d}u \leq M\left(\frac{k\pi}{T}\right)^{\delta}\int_{0}^{\pi}\frac{\sin(u)}{u^{\delta}} \mathrm{d}u. $$
As $\delta$ belongs to $[0,2)$, it follows that
$$ |c_{k}| = \mathcal{O}(k^{\delta-2}). $$
This concludes the proof.

\end{proof}
\begin{proof}[Proof of Theorem \ref{thm:1}]
We will use the following standard bound on the maximum of centered Gaussian random variables. If $X_{1},\dots,X_{M}$ are centered Gaussian random variables, then there exists $c >0$, such that
\begin{equation}\label{eq:brns}
\mathbf{E}\underset{1\leq i \leq M}{\max}|X_{i}| \leq c\sqrt{\log{M}}\underset{1\leq i \leq M}{\max}\sqrt{\mathbf{E}X_{i}^{2}}.
\end{equation}
We denote by $v_{k}(t):=\lambda_{k}e_{k}(\frac{k\pi t}{T})Z_{k}$ for $k\in\mathbb{N}$ and $t\in[0,T]$. 
The proof is divided in two parts. We first show that, for some $A>0$, we have
\begin{equation}\label{eq:best}
\forall n \geq 1, \quad \mathbf{E}\underset{t\in[0,T]}{\sup}\left|\sum_{k=2^n}^{2^{n+1}-1}v_{k}(t) \right| \leq A\sqrt{n}2^{-nH}. 
\end{equation}
Let $N\geq 1$, for all $ 0\leq j \leq N-1$, we denote by $I_{j}=\left[j\frac{T}{N},(j+1)\frac{T}{N}\right]$ and $t_{j}$ the corresponding center i.e. $t_{j}=(j+1/2)\frac{T}{N}$. Let $n\geq 1$, we have 
\begin{equation}\label{eq:tz}
    \begin{aligned}
    \mathbf{E}\underset{t\in[0,T]}{\sup}\left|\sum_{k=2^n}^{2^{n+1}-1}v_{k}(t) \right| &= \mathbf{E}\underset{0\leq j<N}{\sup}\;\underset{t\in I_{j}}{\sup}\left|\sum_{k=2^n}^{2^{n+1}-1}v_{k}(t) \right|\\
    &\leq \mathbf{E}\underset{0\leq j<N}{\sup}\left|\sum_{k=2^n}^{2^{n+1}-1}v_{k}(t_{j}) \right| +  \mathbf{E}\underset{0\leq j<N}{\sup}\;\underset{t\in I_{j}}{\sup}\left|\sum_{k=2^n}^{2^{n+1}-1}\left(v_{k}(t)-v_{k}(t_{j})\right) \right|.
    \end{aligned}
\end{equation}
Using \eqref{eq:brns} we get 
\begin{equation}\label{eq:deux}
\begin{aligned}
    \mathbf{E}\underset{0\leq j<N}{\sup}\left|\sum_{k=2^n}^{2^{n+1}-1}v_{k}(t_{j}) \right| &\leq c\sqrt{\log{N}}\underset{0\leq             j<N}{\sup}\sqrt{\mathbf{E}\left|\sum_{k=2^n}^{2^{n+1}-1}v_{k}(t_{j}) \right|^{2}}\\
    &\leq  c\sqrt{\log{N}}\underset{0\leq j<N}{\sup}\sqrt{\sum_{k=2^n}^{2^{n+1}-1}\mathbf{E}v_{k}(t_{j})^{2} } \\
    &\leq C'\sqrt{\log{N}}2^{-nH},
\end{aligned}
\end{equation}
for some $C'>0$. The last inequality comes from the fact that $\mathbf{E}v_{k}(t_{j})^{2}\leq \lambda_{k}^{2}||e_{k}||_{\infty}^{2} \leq \frac{C}{k^{1+2H}} $, for some $C>0$.\\
For the second part of \eqref{eq:tz}, we have
\begin{equation}\label{eq:last}
     \mathbf{E}\underset{0\leq j<N}{\sup}\;\underset{t\in I_{j}}{\sup}\left|\sum_{k=2^n}^{2^{n+1}-1}\left(v_{k}(t)-v_{k}(t_{j})\right) \right| \leq  \mathbf{E}\underset{0\leq j<N}{\sup}\;\sum_{k=2^n}^{2^{n+1}-1}\underset{t\in I_{j}}{\sup}\left|v_{k}(t)-v_{k}(t_{j})\right|.
\end{equation}
Observing that $\forall t\in I_{j}$ we have $|t-t_{j}|\leq\frac{T}{N}$, we get that
\begin{equation}\label{eq:ret}
    \begin{aligned}
    \underset{t\in I_{j}}{\sup}
    \left|v_{k}(t)-v_{k}(t_{j})\right| 
    &\leq |\lambda_{k}||Z_{k}|
    \left|e_{k}\left( \frac{k\pi t}{T}\right) -e_{k} \left( \frac{k\pi t_{j}}{T} \right)\right|\\
    &\leq C' k^{\frac{1}{2}-H} |Z_{k}|\frac{\pi}{N}.
    \end{aligned}
\end{equation}
Replacing in \eqref{eq:last}, it follows that
\begin{equation}\label{eq:un}
\begin{aligned}
\mathbf{E}\underset{0\leq j<N}{\sup}\;\underset{t\in I_{j}}{\sup}\left|\sum_{k=2^n}^{2^{n+1}-1}\left(v_{k}(t)-v_{k}(t_{j})\right) \right| &\leq \frac{C''}{N} \sum_{k=2^n}^{2^{n+1}-1}k^{\frac{1}{2}-H}\\
&\leq \frac{C^{*}}{N}2^{n(\frac{3}{2}-H)}.
\end{aligned}
\end{equation}
Combining \eqref{eq:deux} and \eqref{eq:un} we deduce that
\begin{equation}\label{eq:uuu}
 \mathbf{E}\underset{t\in[0,T]}{\sup}\left|\sum_{k=2^n}^{2^{n+1}-1}v_{k}(t) \right| \leq C\sqrt{\log{N}}2^{-nH} + \frac{C^{*}}{N}2^{n(\frac{3}{2}-H)}.
\end{equation}
Replacing $N=2^{2n}$ in \eqref{eq:uuu}, we prove \eqref{eq:best}. 

The previous result holds even if we replace $\left|\sum_{k=2^n}^{2^{n+1}-1}v_{k}(t) \right|$ by $\left|\sum_{k=M}^{2^{n+1}-1}v_{k}(t) \right|$ for some $M\in[2^{n},2^{n+1}-1]$. Let $N$ be a positive integer. By taking $m = \lfloor \log{N}/\log{2}\rfloor$, we get
\begin{equation}
\mathbf{E} \operatorname*{sup}_{t\in[0,T]}\left|\sum_{k=N+1}^{\infty} v_{k}(t)\right|\leq \mathbf{E} \operatorname*{sup}_{t\in[0,T]}\left|\sum_{k=N+1}^{2^{m+1}-1}v_{k}(t)\right| + \sum_{i=m+1}^{\infty}\mathbf{E}\underset{t\in[0,T]}{\sup} \left|\sum_{k=2^i}^{2^{i+1}-1}v_{k}(t)  \right|.
\end{equation}
We can conclude, using \eqref{eq:best}, that
\begin{equation}
\begin{aligned}
\mathbf{E} \operatorname*{sup}_{t\in[0,T]}\left|\sum_{k=N+1}^{\infty} v_{k}(t)\right| \leq A\sum_{i=m}^{\infty}\sqrt{i}2^{-iH} \leq A'\sqrt{m}2^{-mH}.
\end{aligned}
\end{equation}
It suffices to observe that $2^m \leq N < 2^{m+1}$, to obtain the rate of the uniform convergence. The uniform tightness implies that $\sum_{k=0}^{N}v_{k}$ has a weak limit in C[0,T] the space of continuous functions on [0,T]. We remind the reader that we endow this space with the supremum metric. By the It\^{o}-Nisio theorem, we get, as in \citep{ITO}, that the process $\sum_{k=0}^{N}v_{k}$ converges in $C[0,T]$ almost surely.
\end{proof}
\begin{proof}[Proof of Theorem \ref{proof:thm:1}]
For $H \in \left(0,1/2\right)$, we denote by $\gamma(t):=|t|^{2H}$. It is easy to check that $\gamma$ satisfies $(\star)$. Using Proposition \ref{pr:1}, the above series is well-defined since $\forall k\geq1, \;c_{k} \leq 0$ and hence we can write $\sqrt{-c_k}$ (remember that we are assuming the series to be convergent).
Because of the independence between the Gaussian random variables $Z$, it follows immediately that
\begin{equation}
\begin{aligned}
\mathbf{E}(B_{s}B_{t}) &= \sum_{k=1}^\infty -\frac{c_{k}}{2} \left ( \sin\frac{k\pi s}{T}\sin\frac{k\pi t}{T} + \left(1-\cos\frac{k\pi s}{T}\right)\left(1-\cos\frac{k\pi t}{T}\right) \right) \\
 &= \sum_{k=1}^\infty -\frac{c_{k}}{2} \left( 1-\cos\frac{k\pi s}{T}-\cos\frac{k\pi t}{T}+\cos\frac{k\pi (t-s)}{T}   \right) \\
 &= \sum_{k=1}^\infty \frac{c_{k}}{2} \left ( \left(\cos\frac{k\pi s}{T}-1\right)+\left(\cos\frac{k\pi t}{T}-1\right)-\left(\cos\frac{k\pi (t-s)}{T}-1\right)   \right).
\end{aligned}
\end{equation}
We conclude using Lemma \ref{lem:1}.
\end{proof}
\begin{proof}[Proof of Theorem \ref{pr:thm:2}]
By considering $\gamma(t) = -2H(2H-1)t^{2H-2}$, we notice that $\gamma$ satisfies $(\star)$ for $H \in (1/2,1)$. Moreover 
$$|\gamma'(t)| = \mathcal{O}\left(\frac{1}{t^{3-2H}}\right),$$
as $t \to 0^{+}$.
Since $1<3-2H<2$, we get using Proposition \ref{pr:1} that $ c_{k} \leq 0$ for all $k\geq 1$, and $|c_{k}| = \mathcal{O} \left(\frac{1}{k^{2H-1}}\right)$. We also obtain, using Lemma \ref{lem:2} on the function $t\to|t|^{2H}$, that
$$ \frac{2}{T}\int_{0}^{T} \left(t^{2H} - HT^{2H-2} t^{2}\right) \cos\left(\frac{k\pi t}{T}\right)\mathrm{d}t = \left(\frac{T}{k\pi}\right)^{2} c_{k}. $$
Since $\frac{|c_{k}|}{k^{2}} = \mathcal{O}\left( \frac{1}{k^{2H+1}} \right)$, the Fourier series converges uniformly and we can apply Lemma \ref{lem:1} to get
$$\forall t\in [-T,T], \quad |t|^{2H} - H T^{2H-2} t^{2} = \sum_{k=1}^{\infty}\left(\frac{T}{k\pi} \right)^{2}c_{k} \left(\cos{\frac{k\pi t}{T}}-1 \right).$$
Since $c_{k}\leq 0$ the series expansion  is well defined and we have
\begin{equation}
\begin{aligned}
\mathbf{E}(B_{t}B_{s}) &= HT^{2H-2} t s - \frac{1}{2}\sum_{k=1}^{\infty}\left(\frac{T}{k\pi}\right)^{2}c_{k}\left(\sin{\frac{k\pi t}{T}}\sin{\frac{k\pi s}{T}}+\left(1-\cos{\frac{k\pi t}{T}} \right)\left(1-\cos{\frac{k\pi s}{T}} \right) \right) \\
&= HT^{2H-2} t s - \frac{1}{2}\sum_{k=1}^{\infty}\left(\frac{T}{k\pi}\right)^{2}c_{k}\left(1-\cos{\frac{k\pi t}{T}}-\cos{\frac{k\pi s}{T}} +\cos{\frac{k\pi (t-s)}{T}} \right) \nonumber \\
&= \frac{1}{2}\left( HT^{2H-2}(t^{2}+s^{2}-(t-s)^{2}) + \sum_{k=1}^{\infty}\left(\frac{T}{k\pi}\right)^{2}c_{k}\left(\cos{\frac{k\pi t}{T}}+\cos{\frac{k\pi s}{T}} -\cos{\frac{k\pi (t-s)}{T}}-1 \right)\right)\nonumber \\
&=\frac{|t|^{2H} + |s|^{2H} - |t-s|^{2H}}{2}. 
\end{aligned}
\end{equation}
\end{proof}

\begin{proof}[Proof of Proposition \ref{prop:11}]
As in previous proofs, we use the independence of Gaussian random variables $(Z_{k})_{k\in\mathbb{Z}}$. It is straightforward to see that, for all  $t \in[0,T]$,
\[  \mathbf{E}\big(B_{t}-B_{t}^{N} \big)^2 = \sum_{k>N} -\frac{c_{k}}{2} \bigg ( (\sin\frac{k\pi t}{T})^2 + (1-\cos\frac{k\pi t}{T})^2   \bigg) 
   =  \sum_{k>N} -c_{k} \bigg ( 1-\cos\frac{k\pi t}{T}   \bigg).  \] 
Hence \[  \operatorname*{sup}_{t\in[0,T]} \sqrt{\mathbf{E}\big(B_{t}-B_{t}^{N} \big)^2} \leq \sqrt{\sum_{k>N} |c_{k}|}.  \] 
Since $|c_{k}| = \mathcal{O}\left( \frac{1}{k^{2H+1}}\right)$, the result follows.
\end{proof}

\begin{proof}[Proof of Proposition \ref{thm:opt}]
We only need to prove that the rate of convergence of the above series is faster than $N^{-H}\sqrt{\log(N)}$ since the latter is the optimal rate of convergence for fBm as shown in \citep{KH}.\
By truncating  the series, we have 
\[\forall t \in [0,T],\quad B_{t}- B_{t}^{N} =  \sum_{k=N+1}^{\infty} \sqrt{-\frac{c_{k}}{2}} \left( Z_{k}  \sin\frac{k\pi t}{T} + Z_{-k} \left(1-\cos\frac{k\pi t}{T}\right)    \right).  \]
Since $\sqrt{-\frac{c_{k}}{2}}=\mathcal{O}\left(\frac{1}{k^{H+1/2}}\right)$, and using the fact that  $t\to\sin(t)$ and $t\to1-\cos(t)$ are 1-Lipschitz functions we can directly use Theorem \ref{thm:1} to conclude the proof.
\end{proof}

\begin{proof}[Proof of Theorem \ref{theorem1}]
Applying Proposition \ref{pr:1}, we obtain that $ c_{k} \leq 0$ for $k\geq 1$. Hence, the series is well defined. Moreover, we also have that $|c_{k}| =  \mathcal{O}\left(\frac{1}{k^{2-\delta}}\right)$.
Since $\delta\in[0,1)$ we can use Theorem \ref{thm:1} to get that
\[ \mathbf{E} \underset{t \in [0,T]}{\sup}\left| \sum_{k=N+1}^\infty \sqrt{\frac{-c_{k}}{2}} \bigg (Z_{k}  \sin\frac{k\pi t}{T} + Z_{-k} \left(1-\cos\frac{k\pi t}{T}\right)    \bigg) \right| =\mathcal{O}\left(\frac{\sqrt{\log(N)}}{N^{\frac{1-\delta}{2}}}\right),\]
and that the series converges almost surely and uniformly in $[0,T]$. It follows that $X_{t}$ is a centered Gaussian process. Its covariance function is given by
\[\forall s,t \in[0,T], \quad \mathbf{E}(X_{s}X_{t}) = \sum_{k=1}^{\infty}\frac{-c_{k}}{2}\left(1-\cos{\frac{k\pi t}{T}} - \cos{\frac{k\pi s}{T}} +\cos{\frac{k\pi(t-s)}{T}} \right). \]
We can conclude using Lemma \ref{lem:1} that
\[\forall s,t \in[0,T], \quad \mathbf{E}(X_{s}X_{t}) = \frac{1}{2}\left(\gamma(t)+\gamma(s)-\gamma(|t-s|)\right). \]
Hence $X_{t}$ is  a Gaussian process of type $(A)$.
\end{proof}

\begin{proof}[Proof of Theorem \ref{theorem2}]
Using the same steps as for Theorem \ref{theorem1}, we get that the series is well defined, and that it converges uniformly almost surely. Hence $X_{t}$ is a Gaussian process. Moreover we have
\[\forall s,t \in[0,T], \quad \mathbf{E}(X_{s}X_{t}) =\sum_{k=0}^{\infty}c_{k}\cos{\frac{k\pi (t-s)}{T}} =\gamma(|t-s|). \]
It follows that $X_{t}$ is a Gaussian process of type $(B)$. Since the basis $(e_{k})_{k\in\mathbb{Z}}$ used in this expansion is orthogonal, we may apply the same argument as in Proposition 4 in \citep{PG}, and deduce that the convergence is rate-optimal.
\end{proof}

\begin{proof}[Proof of Theorem \ref{theorem:3}]
Using the same steps as for Theorem \ref{theorem1}, we get that the series is well defined, and that it converges uniformly almost surely. Hence $X_{t}$ is a Gaussian process. Moreover we have
\[ \forall t,s \in [0,T], \; \mathbf{E}(X_{s}X_{t}) = \sum_{k=1}^\infty c_{k}  \bigg(\sin\frac{k\pi t}{2T}\sin\frac{k\pi s}{2T}\bigg)= \frac{1}{2}\sum_{k=1}^\infty c_{k}  \bigg(\cos{\frac{k\pi(t-s)}{2T}}-\cos{\frac{k\pi(t+s)}{2T}}\bigg). \]
As a consequence we get that
\[ \forall t,s \in [0,T], \; \mathbf{E}(X_{s}X_{t}) = \frac{1}{2}\bigg( \gamma(|t-s|) - \gamma(t+s)\bigg). \]
It follows that $X_{t}$ is a Gaussian process of type $(C)$.
\end{proof}

\section{Proofs of auxiliary Lemmas}
\label{app:lemma}
\begin{proof}[Proof of Lemma \ref{lem:1}]
Let  $g:\mathbb{R}\to\mathbb{R}$ be a function such that
$$\forall t \in [-T,T], \quad g(t)=\gamma(|t|).$$
Extending $g$ into a $2T$-periodic function, it can be defined on $\mathbb{R}$. Since $g$ is an even function, its Fourier expansion is given by 
\begin{equation}\label{eq:re}
\forall t \in [-T,T], \quad g(t) = \sum_{k=0}^\infty c_{k}\cos\frac{k\pi t}{T},  
\end{equation}
where $c_{0} = \frac{1}{T}\int_0^T \gamma(t)\mathrm{d}t  \;$ and   $c_{k} = \frac{2}{T}\int_0^T \gamma(t)\cos\frac{k\pi t}{T}\mathrm{d}t$, for $k \geq 1$.
Using Proposition \ref{pr:1}, we have 
$$|c_{k}| =\mathcal{O}\left( \frac{1}{k^{2-\delta}}\right).$$
Since $0\leq\delta<1$, the Fourier expansion of $g$ is normally convergent and hence converges uniformly. Replacing $t$ by $0$ in \eqref{eq:re} we get that
$$ c_{0} = \gamma(0) - \sum_{k=1}^\infty c_{k}.$$ 
It follows that
$$  g(t) = \gamma(0) + \sum_{k=1}^\infty c_{k}\bigg(\cos\frac{k\pi t}{T}-1\bigg). $$
\end{proof}
\begin{proof}[Proof of Lemma \ref{lem:2}]
The function $f$ is constructed in such a way that $f'(0)=f'(T)=0$. For all $k \geq 1$, integration by parts yields
\begin{equation}
\begin{aligned}
\int_{0}^{T} f(t) \cos\left(\frac{k\pi t}{T}\right)\mathrm{d}t &= \frac{T}{k\pi}\left[f(t)\sin{\left( \frac{k\pi t}{T}\right)} \right]_{0}^{T}  - \frac{T}{k\pi} \int_{0}^{T} f'(t) \sin\left(\frac{k\pi t}{T}\right)\mathrm{d}t \nonumber\\
&= \left(\frac{T}{k\pi}\right)^{2}\left[f'(t)\cos{\left( \frac{k\pi t}{T}\right)} \right]_{0}^{T} - \left(\frac{T}{k\pi}\right)^{2}\int_{0}^{T} f''(t) \cos\left(\frac{k\pi t}{T}\right)\mathrm{d}t \nonumber \\
&= - \left(\frac{T}{k\pi}\right)^{2}\int_{0}^{T} \gamma''(t) \cos\left(\frac{k\pi t}{T}\right)\mathrm{d}t. 
\end{aligned}
\end{equation}
The last equality is due to the fact that cosine harmonics are orthogonal to constants on $[0,T]$. 
\end{proof}
\begin{proof}[Proof of Lemma \ref{lem:compare}]
Let $(a_k)_k$ and $(b_k)_k$ the sequences defined in \eqref{eq:inglot1}-\eqref{eq:inglot2} and \eqref{eq:simo1}-\eqref{eq:simo2} respectively. It comes out that
\begin{align*}
    \frac{b_0}{a_0} &= \frac{2H(2H-1)\Gamma(H-\frac{1}{2})\Gamma(\frac{3}{2}-H)}{2\Gamma(2-2H)}\\
    &= \frac{\Gamma(2H+1)\Gamma(H-\frac{1}{2})\Gamma(\frac{3}{2}-H)}{2\Gamma(2-2H)\Gamma(2H-1)}\\
    &= \frac{\Gamma(2H+1)\sin(2\pi(H-1/2)) }{2 \sin(\pi(H-1/2))}\\
    &= \Gamma(2H+1) \sin(\pi H).
\end{align*}
For the asymptotic behaviour, we can write $b_k$ as
\begin{align*}
b_{k} &= \frac{2H(2H-1)}{(k\pi)^{2}}\int_0^1 t^{2H-2}\cos(k\pi t)\mathrm{d}t\\
&= 2H(2H-1)(k\pi)^{-2H-1} \int_0^{k \pi} t^{2H-2}\cos( t)\mathrm{d}t \\
&= 2H(2H-1)(k\pi)^{-2H-1} \Re(i\exp^{-i\pi H}\gamma(2H-1,ik\pi)).
\end{align*}
It follows that
\begin{align*}
\underset{k \to \infty}{\lim} b_{k}(k\pi)^{2H+1} &=  2H(2H-1) \sin(\pi H)\Gamma(2H-1)\\
&=  \Gamma(2H+1)\sin(\pi H).
\end{align*}
That concludes the proof.
\end{proof}

\end{appendix}

\section*{Acknowledgments}
{This project started while the author was an intern at Bloomberg. The author would like to thank Bruno Dupire and Sylvain Corlay for their valuable insights and discussions. The author is also grateful to A.B. Tsybakov for his comments on early drafts of this work}

\bibliographystyle{imsart-number} 

\end{document}